\documentclass[reqno]{amsart}
\usepackage{amssymb}
\usepackage{amsmath}
\usepackage{enumerate}
\usepackage{srcltx}
\usepackage[mathscr]{euscript}
\usepackage{times}

\usepackage{tikz}

\makeatletter
\@addtoreset{equation}{section}
\makeatother

\newtheorem{theorem}{Theorem}[section]
\newtheorem{lemma}[theorem]{Lemma}
\newtheorem{proposition}[theorem]{Proposition}

\newtheorem{definition}[theorem]{Definition}

\newcounter{as}

\newcommand{\mc}[1]{{\mathcal #1}}
\newcommand{\mf}[1]{{\mathfrak #1}}
\newcommand{\mb}[1]{{\mathbf #1}}
\newcommand{\bb}[1]{{\mathbb #1}}
\newcommand{\bs}[1]{{\boldsymbol #1}}
\newcommand{\ms}[1]{{\mathscr #1}}

\newcommand{\<}{\langle}
\renewcommand{\>}{\rangle}
\renewcommand{\Cap}{{\rm cap}}

\title[Variational formulae for the capacity]
{Variational formulae for the capacity induced by second-order
  elliptic differential operators}
  
\author{C. Landim}

\address{\noindent IMPA, Estrada Dona Castorina 110, CEP 22460 Rio de
  Janeiro, Brasil and CNRS UMR 6085, Universit\'e de Rouen, Avenue de
  l'Universit\'e, BP.12, Technop\^ole du Madril\-let, F76801
  Saint-\'Etienne-du-Rouvray, France.  \newline e-mail: \rm
  \texttt{landim@impa.br} }

\begin{document}

\begin{abstract}
We review recent progress in potential theory of second-order
  elliptic operators and on the metastable behavior of Markov
  processes.
\end{abstract}

\maketitle

There has been many recent progress in the potential theory of
non-reversible Markov processes. We review in this article some of
these advances. In Section \ref{sec01}, we present a brief historical
overview of potential theory and we introduce the main notions which
will appear throughout the article. In Section \ref{sec02}, we present
two variational formulae for the capacity between two sets induced by
second-order elliptic operators non necessarily self-adjoint. In the
following three sections we present applications of these results. In
Section \ref{sec04}, we discuss recurrence of Markov processes; in
Section \ref{sec05}, we present a sharp estimate for the transition
time between two wells in a dynamical system randomly perturbed; and
in Section \ref{sec03}, we prove the metastable behavior of this
process.

\section{Potential theory}
\label{sec01}

We present in this section a brief historical introduction to the
Dirichlet principle. The interested reader will find in Kellogg's book
\cite{Kel} a full account and references.

\noindent{\bf From Newton's law of universal gravitation to Laplace's equation.}
In 1687, Newton enunciated the Law of universal gravitation which
states that ``every particle of matter in the universe attracts every
other particle with a force whose direction is that of the line
joining the two, and whose magnitude is directly as the product of
their masses, and inversely as the square of their distance from each
other''. The magnitude $F$ of the force between two particules, one of
mass $m_1$ situated at $x\in\bb R^3$ and one of mass $m_2$ situated at
$y\in \bb R^3$ is thus given by
\begin{equation}
\label{01}
F \;=\; \kappa\, \frac{m_1 \, m_2}{\Vert x-y\Vert^2}\;, 
\end{equation}
where $\Vert (z_1,z_2,z_3)\Vert = \sqrt{z^2_1 + z^2_2 + z^2_3}$ stands
for the Euclidean distance, and $\kappa$ for a constant which depends
only on the units used. In order to avoid the constant $\kappa$ we
choose henceforth the unit of force so that $\kappa = 1$.

Once Newton's gravitation law has been formulated, it is natural to
calculate the force exerted on a particle of unit mass by different
types of bodies.  Consider a body $\mf B$ occupying a portion $\Omega$
of the space $\bb R^3$.  Assume that its density $\rho$ at each point
$z\in \Omega$ is well defined and that it is continuous and bounded as
a function of $z$. By density at $z$ we mean the limit of the ratio
between the mass of a portion containing $z$ and the volume of the
portion, as the volume of the portion vanishes.  By \eqref{01}, the
force at a point $x\in\bb R^3$ is given by
\begin{equation}
\label{02}
F(x) \;=\; \int_{\Omega} \frac{z-x}{\Vert z-x\Vert^3} \, \rho(z)\,
dz \;.
\end{equation}
Note that the force is well defined in $\Omega$ because $x\mapsto
\Vert x\Vert^{-2}$ is integrable in a neighborhood of the origin and
we assumed the density $\rho$ to be bounded.  Equation \eqref{02}
defines, therefore, a vector field $F = (F_1, F_2, F_3): \bb R^3 \to
\bb R^3$.

The force field $F$ introduced in \eqref{02} turns out to be
\emph{divergence free} in $\Omega^c$:
\begin{equation}
\label{03}
(\nabla \cdot F )(x) \;:=\; \sum_{j=1}^3 (\partial_{x_j} F_j)(x) \;=\;
0\;, \quad x\in \Omega^c\;,
\end{equation}
where $\partial_{x_j}$ represents the partial derivative with respect
to $x_j$. It is also \emph{conservative}: Fix a point $x \in \bb R$, and
let $\gamma: [0,1]\to\bb R^3$ be a smooth, closed path such that
$\gamma(0) =\gamma(1) =x$. The integral of the force field along
the cycle $\gamma$ is given by
\begin{equation*}
\int F \cdot d\gamma \;:=\; \sum_{j=1}^3
\int F_j(\gamma) \, d \gamma_j \;=\;
\sum_{j=1}^3 \int_0^1 F_j(\gamma(t)) \, \gamma'_j (t)\,
dt \;=\;0\;.
\end{equation*}

As the force field is conservative and the space is simply connected
[any two paths with the same endpoints can be continuously deformed
one into the other], we may associate a potential $V:\bb R^3 \to \bb
R$ to the vector field $F$. Fix a point $x_0\in\bb R^3$ and a constant
$C_0$, and let
\begin{equation}
\label{05}
V(x) \;=\; C_0 \;+\; \int F \cdot d\gamma \;,
\end{equation}
where $\gamma$ is a continuous path from $x_0$ to $x$. The potential
$V$ is well defined because the force field is conservative, and it is
unique up to an additive constant. By requiring it to vanish at
infinity, it becomes
\begin{equation}
\label{1-1}
V(x) \;=\; -\, \int_{\bb R^3} \frac{1}{\Vert z-x\Vert} \, \rho(z)\,
dz \;,
\end{equation}
and it is called the Newton potential of the measure $\rho(z)\, dz$.
Moreover, if we represent by $\nabla V$ the gradient of $V$, $\nabla V
= (\partial_{x_1} V \,,\, \partial_{x_2} V\,,\, \partial_{x_3} V)$,
\begin{equation}
\label{07}
\nabla  V \;=\; F\;.
\end{equation}
Hence, since the force field is divergence-free [equation \eqref{03}],
\begin{equation}
\label{04}
\Delta V \;:=\; \nabla \cdot \nabla V \;=\; 0 \quad\text{on } \Omega^c
\;,
\end{equation}
which is known as Laplace's differential equation.

This last identity provides an alternative way to compute the force
field induced by a body whose density is unknown. Let $\mf B$ be a
body occupying a portion $\Omega$ of the space $\bb R^3$. Assume that
$\Omega^c$ is a domain [open and connected] which has a smooth, simply
connected boundary, denoted by $\partial \Omega$. Assume, further,
that the force field $F$ exerted by the body $\mf B$ can be measured
at the boundary of $\Omega$. Fix a point $x_0\in \partial\Omega$,
set $V(x_0)=0$, and extend the definition of $V$ to $\partial\,
\Omega$ through equation \eqref{05}. By \eqref{04}, the potential $V$
solves the equation
\begin{equation}
\label{06}
\begin{cases}
(\Delta W) (x) \;=\; 0 \;, \;\; x\in \Omega^c  \;, \\
W(x) \;=\; V(x) \;, \;\; x\in \partial\Omega \;.
\end{cases}
\end{equation}
To derive $F$, it remains to solve the linear equation \eqref{06} and
to retrieve $F$ from $V$ by \eqref{07}.

The problem of proving the existence of a function satisfying
\eqref{06} or of finding it when it exists is known as the Dirichlet
problem, or the first boundary problem of potential theory.
 
\smallskip\noindent{\bf Dirichlet's principle.}  In 1850, Dirichlet
proposed the following argument to prove the existence of a solution
to \eqref{06}. It is simpler to present it in the context of masses
distributed along surfaces.  If mass points disturb, on may think in
terms of charges since, according to Coulomb's law, two point charges
exert forces on each other which are given by Newton's law with the
word mass replaced by charge, except that charges may attract or repel
each other. 

Consider a bounded domain $\Omega$ whose boundary, represented by
$\partial \Omega$, is smooth. Let $\zeta$ be a surface density on
$\partial \Omega$. By \eqref{1-1}, the potential associated to this
mass distribution is given by
\begin{equation*}
V(x) \;=\; -\, \int_{\partial \Omega} \frac{1}{\Vert z-x\Vert} \,
\zeta (z)\, \sigma(dz)\;,
\end{equation*}
where $\sigma(dz)$ stands for the surface measure. The surface
density can be recovered from the potential. By Theorem VI of Chapter
VI in \cite{Kel},
\begin{equation}
\label{09}
\frac{\partial V}{\partial n_+} (x)  \;-\; 
\frac{\partial V}{\partial n_-} (x) 
\;=\; -\, 4\pi\, \zeta(x)\;, 
\end{equation}
where $n_+$, resp. $n_-$, represents the outward, resp. inward,
pointing unit normal vector to $\partial \Omega$. 

Denote by $E(\zeta)$ the potential energy of the mass distribution
$\zeta$. It corresponds to the total work needed to assemble the
distribution from a state of infinite dispersion, and it is given by
\begin{equation*}
E(\zeta) \;=\; 
\frac 12\, \int_{\partial \Omega} V (x)\, \zeta (x)\, \sigma(dx)\;.
\end{equation*} 
Since, by \eqref{09}, the surface density can be expressed in terms of
the potential, we may consider the energy as a function of the
potential. After this replacement, as the potential satisfies
Laplace's equation \eqref{04} on $(\partial \Omega)^c$, applying the
divergence theorem, we obtain that
\begin{equation*}
E(V) \;=\; \frac 1{8\pi}\, \int_{\bb R^3} 
\Vert \nabla V (x) \Vert^2 \, dx \;.
\end{equation*}

It is a principle of physics that equilibrium is characterized by the
least potential energy consistent with the constraints of the
problem. Thus, to prove the existence of a solution of the
differential equation \eqref{06}, Dirichlet proposed to consider the
variational problem
\begin{equation*}
\inf_f \int_{\bb R^3} \Vert (\nabla f)(x) \Vert^2 \, dx\;,
\end{equation*}
where the infimum is carried over all smooth functions $f: \bb R^3 \to
\bb R$ such that $f=V$ on $\partial\Omega$.

Mathematicians objected to the argument at an early date, pointing
that the infimum might not be attained at an element of the class of
functions considered. Weierstrass gave an example showing that the
principle was false, and in 1899, Hilbert provided conditions on the
surface, the boundary values and the class of functions $f$ admitted,
for which the Dirichlet principel could be proved.

\smallskip\noindent{\bf Condenser capacity.}  In electrostatics, the
capacity of an isolated conductor is the the total charge the
conductor can hold at a given potential.

Let $\Omega_1 \subset \Omega_2$ be bounded domains with smooth
boundaries represented by $\Sigma_1$, $\Sigma_2$, respectively. Assume
that the closure of $\Omega_1$ is contained in $\Omega_2$.  Consider
the potential which is equal to $1$ at $\Omega_1$, $0$ at
$\Omega^c_2$, and which satisfies Laplace's equation on $\ms R =
\Omega_2 \setminus \overline{\Omega}_1$. Since $V$ satisfies Laplace's
equation on $(\Sigma_1 \cup \Sigma_2)^c$, this potential can be
obtained from a surface distribution concentrated on $\Sigma_1 \cup
\Sigma_2$. The total mass [charge] on $\Sigma_1$ is given by
\begin{equation*}
\int_{\Sigma_1} \zeta(x)\, \sigma(dx) \;=\;
-\, \frac 1{4\pi} \int_{\Sigma_1} 
\frac{\partial V}{\partial n_+} (x) \, \sigma(dx)\;,
\end{equation*}
where the last identity follows from \eqref{09} and from the fact that
the inward derivative vanishes because $V$ is constant in $\Omega_1$.
The condenser capacity of $\Omega_1$ relative to $\Omega_2$ is given
by
\begin{equation}
\label{1-01}
\Cap (\Omega_1, \Omega^c_2) \;=\; 
-\, \frac 1{4\pi} \int_{\Sigma_1} 
\frac{\partial V}{\partial n_+} (x) \, \sigma(dx)\;,
\end{equation}
The measure $\nu = -\, (1/4\pi)\, (\partial V/\partial n_+) (x) \,
\sigma(dx)$ on $\Sigma_1$ is called the \emph{harmonic measure}.
The capacity of $\Omega_1$ is obtained by letting $\Omega_2$ increase
to $\bb R^d$.

As the potential $V$ is equal to $1$ on $\Omega_1$ and $0$ at
$\Sigma_2$, we may insert $V(x)$ in the previous integral, add the
integral of the same expression over $\Sigma_2$, and then use the
divergence theorem and the fact that $V$ is harmonic on the annulus
$\ms R = \Omega_2\setminus\Omega_1$ to conclude that the previous
expression is equal to
\begin{equation*}
\frac 1{4\pi} \int_{\ms R}
\Vert (\nabla V)(x)\Vert^2 \, dx \;=\; 
\frac 1{4\pi} \inf_f \int_{\bb R^3} 
\Vert (\nabla f)(x)\Vert^2 \, dx\;,
\end{equation*}
where the infimum is carried over all smooth functions $f$ such that
$f=1$ on $\Omega_1$ and $f=0$ on $\Omega_2^c$. This latter formula
provides a variational formula for the capacity defined by
\eqref{1-01}, called the Dirichlet principle. 

In the next section, we present two variational formulae for the
capacity induced by a second-order elliptic operator which is not
self-adjoint with respect to the stationary state [as it is the case
of the Laplace operator with respect to the Lebesgue measure]. We then
present some applications of the formulae.

\section{Dirichlet and Thomson principles}
\label{sec02}

In this section, we extend the notion of capacity to the context of
general second order differential operators not necessarily
self-adjoint. We then provide two variational formulae for the
capacity, the so-called Dirichlet and Thomson principles. We will not
be precise on the smoothness conditions of the functions and of the
boundary of the sets. The interested reader will find in the
references rigorous statements.

To avoid integrability conditions at infinity, we state the Dirichlet
and the Thomson principles on a finite cube with periodic boundary
conditions.  Fix $d\ge 1$, and denote by $\bb T^d = [0,1)^d$ the
$d$-dimensional torus of length $1$.  Denote by {\color{blue}$a(x)$} a
uniformly positive-definite matrix whose entries $a_{i,j}$ are smooth
functions: There exist $c_0>0$ such that for all $x\in \bb T^d$,
$\xi\in \bb R^d$,
\begin{equation}
\label{19}
\xi \cdot a (x) \, \xi \;\ge\; c_0 \, \Vert\xi \Vert^2 \;,
\end{equation}
where {\color{blue} $\eta\cdot \xi$} represents the scalar product in
$\bb R^d$.

\smallskip\noindent{\bf Generator.}
Denote by $\mc L$ the generator given by
\begin{equation}
\label{2-12}
\mc L f\;=\; \nabla \cdot (a \, \nabla f) \;+\; b \cdot \nabla f \;, 
\end{equation}
where $b: \bb T^d \to \bb R^d$ is a smooth vector field. By modifying
the drift $b$ we could assume the matrix $a$ to be symmetric. We will
not assume this condition for reasons which will become clear in the
next sections. There exists a unique Borel probability measure such
that $\mu \mc L=0$. This measure is absolutely continuous, $\mu(dx) =
m(x) dx$, where $m$ is the unique solution to
\begin{equation}
\label{2-16}
\nabla \cdot (a^\dagger \, \nabla m) \;-\; \nabla \cdot (b\, m) \;=\;
0 \;, 
\end{equation}
where {\color{blue}$a^\dagger$} stands for the transpose of $a$. For
existence, uniqueness and regularity conditions of solutions of
elliptic equations, we refer to \cite{gt}. Let $V(x) = - \log m(x)$,
so that {\color{blue}$m(x) = e^{-V(x)}$}.

Throughout this section $A$, $B$ represent two closed, disjoint
subsets of $\bb T^d$ 
\begin{equation}
\label{2-15}
\text{which are the closure of open sets with smooth boundaries.}
\end{equation}
For such a set $A$, denote by {\color{blue}$\mu_A (dx)$} the measure
$m(x) \sigma(dx)$ on $\partial A$, where {\color{blue}$\sigma(dx)$}
represents the surface measure. Hence, for every smooth vector field
$\varphi$,
\begin{equation*}
\oint_{\partial A} \varphi (x) \cdot n_A (x) \, \mu_A(dx)
\;=\; \oint_{\partial A} \varphi  (x) \cdot n_A (x) \, e^{-V(x)}
\, \sigma (d x)\;,
\end{equation*}
where {\color{blue}$n_A$} represents the {\color{blue}inward} normal
vector to $\partial A$.

We may rewrite the generator $\mc L$ introduced in \eqref{2-12} as
\begin{equation*}
\mc L f \;=\;  e^V \nabla \cdot \big( e^{-V} a \nabla f\big)
\;+\; c\cdot \nabla f\;,
\end{equation*}
where {\color{blue} $c=b + a^\dagger \nabla V$}. It follows from
\eqref{2-16} that
\begin{equation}
\label{2-07}
\nabla \cdot (e^{-V} c) \;=\; 0\;.
\end{equation}
This implies that the operator $c\cdot \nabla$ is skew-adjoint in
$L_2(\mu)$: for any smooth functions $f$, $g: \bb T^d\to \bb R$,
\begin{equation}
\label{2-01}
\int f \, c\cdot \nabla g \, d\mu
\;=\; -\, \int g\,  c\cdot \nabla f \, d\mu\;,
\end{equation}
and that for any open set $D$ with a smooth boundary,
\begin{equation}
\label{2-05}
\oint_{\partial D} c \cdot n_D  \, d\mu_D 
\;=\; \int_{\bb T^d\setminus D} e^V \, \nabla \cdot (e^{-V} c)\, d\mu 
\;=\; 0\;.
\end{equation}

In view of \eqref{2-01}, the adjoint of $\mc L$ in $L_2(\mu)$,
represented by $\mc L^*$, is given by
\begin{equation*}
\mc L^* f \;=\; e^V \, \nabla \cdot \big( e^{-V} a^\dagger \nabla f\big)
\;-\; c\cdot \nabla f\;,
\end{equation*}
while the symmetric part, denoted by $\mc L^s$, $\mc L^s = (1/2)(\mc L
+ \mc L^*)$, takes the form
\begin{equation}
\label{2-8}
\mc L^s f \;=\; e^V \, \nabla \cdot \big( e^{-V} a_s \nabla f\big) \;.
\end{equation}
where $a_s$ stands for the symmetrization of the matrix $a$:
{\color{blue}$a_s = (1/2)[a + a^\dagger]$}.

\smallskip\noindent{\bf Capacity.} Recall that $A$, $B$ are closed
sets satisfying \eqref{2-15}. Let
\begin{equation*}
\Omega \;=\; \bb T^d\setminus (A \cup B)\;.
\end{equation*}
Denote by {\color{blue}$h = h_{A,B}$}, resp. {\color{blue}$h^*=
  h^*_{A,B}$}, the unique solutions to the linear elliptic equations
\begin{equation}
\label{2-03}
\begin{cases}
\mc Lh=0 & \text{on $\Omega$,}
\\
h= \chi_A & \text{on $A\cup B$,}
\end{cases}
\qquad
\begin{cases}
\mc L^*h=0 & \text{on $\Omega$,}
\\
h^* = \chi_A & \text{on $A\cup B$,}
\end{cases}
\end{equation}
where $\chi_C$, $C\subset \bb T^d$, represents the indicator function
of the set $C$. The functions $h$, $h^*$ are called the {\color{blue}
  equilibrium potentials} between $A$ and $B$. A function $f$ such
that $(\mc L f)(x)=0$ is said to harmonic at $x$. If it is
{\color{blue} harmonic} at all points in some domain $\Omega$, it is
said to be harmonic in $\Omega$.

By analogy to the electrostatic definition \eqref{1-01} of the
capacity of a set, define the \emph{capacity} between the sets $A$,
$B$ of $\bb T^d$ as 
\begin{equation}
\label{2-09}
\Cap (A,B) \;:=\; \oint_{\partial A} a\, \nabla h \cdot n_A \,
d\mu_A\;, \quad 
\Cap^* (A,B) \;:=\; \oint_{\partial A} a^\dagger\, \nabla h^* \cdot n_A 
\, d\mu_A\;.
\end{equation}
Since $h=1$ at $\partial A$ and $h=0$ at $\partial B$, we may insert
$h$ in the integral and add the surface integral of the same
expression over $\partial B$. Applying then the divergence theorem, we
obtain that 
\begin{equation*}
\Cap (A,B) \;=\; \int_\Omega \nabla h \cdot a \, \nabla h \, d\mu \;+\;
\int_\Omega h \, e^V\, \nabla \cdot \big( e^{-V}\, a \, \nabla h \big)
\, d\mu \;.
\end{equation*}
As $\nabla h$ vanishes on $A\cup B$, we may extend the integrals to
$\bb T^d$. The integrand in the second term can be written as $h \,
[\mc Lh - c \cdot \nabla h]$. Since $c\cdot \nabla$ is skew-adjoint
and $h$ is harmonic on $(\partial A \cup \partial B)^c$, we conclude
that
\begin{equation}
\label{2-02}
\Cap(A,B) \;=\; \int \nabla h \cdot a \, \nabla h \, d\mu \;, \quad
\Cap^*(A,B) \;=\; \int \nabla h^*_{A,B} \cdot a^\dagger \, \nabla h^*_{A,B} \, d\mu \;.
\end{equation}
In the previous formulae, we may replace $a$, $a^\dagger$ by their
symmetric part $a_s$, and we may restrict the integrals to $\Omega$.

\begin{lemma}
\label{l2-01}
Let $A$, $B$ be two closed subsets satisfying the conditions
\eqref{2-15}. Then,
\begin{equation*}
\Cap(A,B) \;= \; \Cap(B,A) \;=\; \Cap^*(A,B) \;. 
\end{equation*}
Moreover, 
\begin{equation}
\label{2-14}
\Cap(A,B) \; =\; \oint_{\partial A} \left(a\, \nabla h + h\, c\right)\cdot n_A
\,d\mu_A \;. 
\end{equation}
\end{lemma}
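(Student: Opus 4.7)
The plan is to establish the three claims in the order: (i) the boundary integral representation \eqref{2-14}, (ii) the symmetry $\Cap(A,B)=\Cap(B,A)$, and (iii) the self-adjoint identity $\Cap(A,B)=\Cap^*(A,B)$.

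First, for \eqref{2-14}, I would start from the definition \eqref{2-09} and show that adding the term $\oint_{\partial A} h\, c\cdot n_A\, d\mu_A$ contributes nothing. Indeed, since $h\equiv 1$ on $\partial A$, this extra term equals $\oint_{\partial A} c\cdot n_A\, d\mu_A$, which vanishes by \eqref{2-05} applied to $D=A$. This step is purely algebraic.

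Next, for $\Cap(A,B)=\Cap(B,A)$, I would observe that the equilibrium potential $h_{B,A}$ satisfies $\mc L(1-h_{A,B})=0$ on $\Omega$ with the correct boundary values $0$ on $A$ and $1$ on $B$; by uniqueness of the Dirichlet problem, $h_{B,A}=1-h_{A,B}$, so $\nabla h_{B,A}=-\nabla h_{A,B}$ and the quadratic form expression \eqref{2-02} gives the equality instantly.

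The main content is $\Cap(A,B)=\Cap^*(A,B)$. The plan is to compute the mixed bilinear form $\int_{\Omega}\nabla h^*\cdot a\,\nabla h\, d\mu = \int_{\Omega}\nabla h\cdot a^\dagger\nabla h^*\, d\mu$ in two ways, integrating by parts so as to shift the derivative onto $h$ in one computation and onto $h^*$ in the other. In the first, using $\mc L h=0$ on $\Omega$ and the boundary conditions $h^*=\chi_A$ on $\partial\Omega$, one gets
\begin{equation*}
\int_\Omega \nabla h^*\cdot a\,\nabla h\, d\mu \;=\; \int_\Omega h^*\, c\cdot \nabla h\, d\mu \;+\; \Cap(A,B)\;,
\end{equation*}
recognizing the boundary term on $\partial A$ as the capacity via \eqref{2-09}. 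In the second, using $\mc L^*h^*=0$ and $h=\chi_A$ on $\partial\Omega$,
\begin{equation*}
\int_\Omega \nabla h\cdot a^\dagger\nabla h^*\, d\mu \;=\; -\int_\Omega h\, c\cdot \nabla h^*\, d\mu \;+\; \Cap^*(A,B)\;.
\end{equation*}
Equating the two yields $\Cap(A,B)-\Cap^*(A,B)=-\int_\Omega c\cdot \nabla(h\,h^*)\, d\mu$.

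To close the argument, I would note that $h\,h^*\equiv 1$ on $A$ and $h\,h^*\equiv 0$ on $B$, so $\nabla(h\,h^*)=0$ on $A\cup B$, and the integral over $\Omega$ equals the integral over $\bb T^d$. The product $h\,h^*$ is Lipschitz on the torus, so \eqref{2-07} and integration by parts give $\int_{\bb T^d} c\cdot \nabla(h\,h^*)\, d\mu=0$, yielding the desired equality. The delicate point, and the step I would treat with some care, is this last integration by parts across the boundaries $\partial A\cup \partial B$, where $h$ and $h^*$ are only piecewise smooth; this is why it is important that $h\,h^*$ is continuous and vanishes on one piece while being constant on the other, so that the normal-derivative jumps do not produce distributional contributions.
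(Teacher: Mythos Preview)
Your proof is correct and follows essentially the same route as the paper: the symmetry $\Cap(A,B)=\Cap(B,A)$ via $h_{B,A}=1-h_{A,B}$, and the identity $\Cap(A,B)=\Cap^*(A,B)$ via computing the mixed form $\int_\Omega \nabla h^*\cdot a\,\nabla h\,d\mu$ in two ways and using the skew-adjointness of $c\cdot\nabla$ (your expression $\int c\cdot\nabla(hh^*)\,d\mu=0$ is exactly the paper's use of \eqref{2-01}). Your derivation of \eqref{2-14} via \eqref{2-05} is slightly more direct than the paper's hint (which redoes the divergence-theorem computation using \eqref{2-07}), and your explicit attention to the Lipschitz regularity of $hh^*$ across $\partial A\cup\partial B$ is a point the paper leaves implicit.
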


\begin{proof}
It is clear that $\Cap(B,A)=\Cap(A,B)$ since $h_{B,A}=1-h_{A,B}$ as
$A\cap B=\varnothing$. The proof of \eqref{2-14} is similar to the one
which led from the definition of the capacity to \eqref{2-02}. One has
just to recall from \eqref{2-07} that $\nabla \cdot (e^{-V} c) =0$. 

We turn to the proof that $\Cap(A,B) = \Cap^*(A,B)$ It relies on the
claim that
\begin{align*}
\Cap(A,B) \; =\; \int_{\bb T^d} \big\{ \nabla h^* \cdot a\, \nabla h
- h^* \,c\cdot \nabla h \big\} \, d\mu
\;=\;\int_{\bb T^d} \big\{ \nabla h \cdot a^\dagger\, \nabla h^* 
+  h \,c\cdot \nabla h^*\big\} \, d\mu \;.
\end{align*}
To prove this claim, repeat the calculations carried out to derive
\eqref{2-14} to conclude that
\begin{equation*}
\int_{\bb T^d} \nabla h^* \cdot a\, \nabla h \, d\mu \;=\;
\int_{\Omega} \nabla h^* \cdot a\, \nabla h \, d\mu \;=\;
\oint_{\partial A}  a\, \nabla h\cdot n_A \, d\mu_{A}
\;+\; \int_{\Omega} h^*\, c\cdot \nabla h \, d\mu\;.
\end{equation*}
Since $\nabla h$ vanishes on $A\cup B$, we may carry the second
integral over $\bb T^d$. This proves the first identity of the claim
because the first term on the right hand side is equal to the capacity
between $A$ and $B$.

The same computation inverting the roles of $h$ and $h^*$ gives that
\begin{equation*}
\int_{\bb T^d} \nabla h \cdot a^\dagger\, \nabla h^* \, d\mu \;=\;
\Cap^*(A,B) \; -\; \int_{\bb T^d}  h\,c\cdot \nabla h^* \, d\mu \;.
\end{equation*}
Compare this identity with the previous one. The left-hand sides
coincide. As $c\cdot \nabla$ is skew-adjoint, the second terms on the
right-hand sides are also equal. Hence, $\Cap(A,B)=\Cap^*(A,B)$ because
the first term on the right-hand side of the penultimate equation is
$\Cap(A,B)$.  The previous identity together with the fact that
$\Cap^*(A,B)=\Cap(A,B)$ yields the second identity of the claim.
\end{proof}

Considering $\mc L^*$ in place of $\mc L$ we obtain from the previous
lemma that 
\begin{equation}
\label{2-08}
\Cap^*(A,B) \;=\; 
\oint_{\partial A} \left(a^\dagger\, \nabla h^* 
- h^*\, c\right)\cdot n_A \,d\mu_A 
\end{equation}

\smallskip\noindent{\bf Variational formulae for the capacity.}  Let
$\mc F$ be the Hilbert space of vector fields $\varphi : \Omega\to \bb
R^d$ endowed with the scalar product $\langle \cdot,\cdot \rangle$
given by:
\begin{equation*}
\langle \varphi, \psi \rangle \;:=\; 
\int_{\Omega} \varphi \cdot a_s^{-1} \, \psi\, d\mu \;. 
\end{equation*}
Let $\mc F_\gamma$, $\gamma\in \bb R$, be the closure in $\mc F$ of
the space of smooth vector fields $\varphi\in \mc F$ such that
\begin{equation}
\label{e:fc}
\nabla \cdot (e^{-V}\varphi)=0\;,\qquad 
\oint_{\partial A} \varphi \cdot n_A \, d\mu_A\,  \;=\; -\, \gamma\;. 
\end{equation}

Let $\mc C_{\alpha,\beta}$, $\alpha$, $\beta\in\bb R$, be the space of
smooth functions $f:\Omega \to \bb R$ such that $f \equiv \alpha$ on
$A$ and $f\equiv \beta$ on $B$. For $f \in \mc C_{\alpha,\beta}$
define
\begin{equation*}
\Psi_f \; :=\; a_s \, \nabla f \;, \quad 
\Phi_f \; :=\; a^\dagger \, \nabla f \;-\; f\, c\;.
\end{equation*}
Note that
\begin{equation}
\label{2-06}
\< \Psi_{h} , \Psi_{h}\> \;=\; \int_{\Omega} \nabla h \cdot a_s\,
\nabla h \, d\mu \;=\; \Cap(A,B)\;.
\end{equation}

\begin{lemma}
\label{l2-03}
For every $\varphi\in \mc F_\gamma$ and $f\in \mc C_{\alpha,0}$ we
have that
\begin{equation*}
\langle \Phi_f-\varphi \,,\, \Psi_h\rangle
\;=\; \gamma\;+\; \alpha\,\Cap(A,B)\;.
\end{equation*}
\end{lemma}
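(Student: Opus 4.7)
The plan is to unfold the inner product, split the expression into a ``test function'' integral $\int \Phi_f\cdot \nabla h\, d\mu$ and a ``flux'' integral $\int \varphi\cdot \nabla h \, d\mu$, and evaluate each one by integration by parts against the weight $e^{-V}$, using in one case that $\mc Lh=0$ on $\Omega$ and in the other that $\nabla\cdot(e^{-V}\varphi)=0$.

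First I would observe that $a_s^{-1}\Psi_h=\nabla h$, so by definition of the scalar product,
\begin{equation*}
\langle \Phi_f-\varphi\,,\,\Psi_h\rangle
\;=\; \int_\Omega \Phi_f\cdot \nabla h\, d\mu
\;-\; \int_\Omega \varphi\cdot \nabla h\, d\mu\;.
\end{equation*}
Next I would treat the first integral. Using $\Phi_f=a^\dagger\nabla f - f\,c$ and the identity $(a^\dagger\nabla f)\cdot\nabla h=\nabla f\cdot a\nabla h$, the integrand equals $\nabla f\cdot a\nabla h - f\, c\cdot \nabla h$. I then apply the divergence theorem on $\Omega$ to the vector field $f\, e^{-V}\, a\nabla h$. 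The outward normal to $\Omega$ along $\partial A$ (resp.\ $\partial B$) coincides with the inward normal $n_A$ (resp.\ $n_B$); since $f\equiv \alpha$ on $A$ and $f\equiv 0$ on $B$, only $\partial A$ contributes and, by the definition \eqref{2-09} of the capacity, the boundary term is $\alpha\,\Cap(A,B)$. The bulk term is $-\int_\Omega f\,\nabla\cdot(e^{-V}a\nabla h)\,dx$; rewriting $\mc L h = e^V\nabla\cdot(e^{-V}a\nabla h)+c\cdot\nabla h$ and using $\mc Lh=0$ on $\Omega$, this equals $+\int_\Omega f\, c\cdot \nabla h\,d\mu$, which exactly cancels the $-f\, c\cdot \nabla h$ contribution. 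Hence
\begin{equation*}
\int_\Omega \Phi_f\cdot \nabla h\, d\mu \;=\; \alpha\,\Cap(A,B)\;.
\end{equation*}

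For the second integral I apply the divergence theorem to $h\, e^{-V}\varphi$ on $\Omega$. The bulk term $\int_\Omega h\,\nabla\cdot(e^{-V}\varphi)\,dx$ vanishes by the first condition in \eqref{e:fc}. Using again that the outward normal to $\Omega$ on $\partial A$, $\partial B$ equals $n_A$, $n_B$, and that $h=1$ on $\partial A$, $h=0$ on $\partial B$, the surface term reduces to $\oint_{\partial A}\varphi\cdot n_A\, d\mu_A = -\gamma$ by the second condition in \eqref{e:fc}. Thus $\int_\Omega \varphi\cdot \nabla h\, d\mu = -\gamma$, and combining the two evaluations yields $\langle \Phi_f-\varphi,\Psi_h\rangle = \alpha\,\Cap(A,B)+\gamma$.

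The only subtle point is bookkeeping the orientation of normals (so that the sign in the capacity formula and the sign in the flux constraint on $\varphi$ come out consistently) and keeping track of which term cancels which; the regularity assumed in \eqref{2-15} together with standard elliptic regularity for $h$, $h^*$ makes all the integrations by parts legitimate, so no analytic obstacle arises beyond the density argument implicit in the definition of $\mc F_\gamma$.
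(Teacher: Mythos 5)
Your proposal is correct and follows essentially the same route as the paper: expand the scalar product, integrate by parts against the weight $e^{-V}$, use harmonicity of $h$ to cancel the $f\,c\cdot\nabla h$ term, and invoke the two defining conditions of $\mc F_\gamma$ for the flux term. The only difference is organizational (you split the $\Phi_f$ and $\varphi$ contributions up front and treat them separately, whereas the paper carries both through a single chain of integrations by parts), and your sign bookkeeping for the normals is consistent with the paper's convention that $n_A$ is the inward normal to $A$, i.e.\ the outward normal to $\Omega$.
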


\begin{proof}
Fix $\varphi\in \mc F_\gamma$ and $f\in \mc C_{\alpha,0}$.
By definition of $\Phi_f$, 
\begin{equation*}
\langle \Phi_f-\varphi, \Psi_h\rangle \;=\;
\int_{\Omega} \left(a^\dagger \, \nabla f
\,-\, f\,c \,-\, \varphi\right)\cdot \nabla h\, d\mu \;.
\end{equation*}
Writing $a^\dagger \, \nabla f \cdot \nabla h$ as $\nabla f \cdot a
\nabla h$, and integrating by parts, since $f=\alpha$ on $\partial A$
and $f=0$ on $\partial B$, the previous term becomes
\begin{equation*}
- \int_{\Omega}  \Big( f\,e^{V} \nabla \cdot 
\left(e^{-V} a\, \nabla h\right) \,+\,  f\,c + \varphi \Big) \cdot \nabla h 
\, d\mu 
\;+\; \alpha \oint_{\partial A} a\, \nabla h \cdot n_A \, d\mu_A \;. 
\end{equation*}
By definition, the last integral is the capacity between $A$ and $B$,
while the expression involving $f$ is equal to $- f\, \mc L h$. This
expression vanishes because $h$ is $\mc L$-harmonic in
$\Omega$. Hence, since $h = \chi_A$ on $\partial A \cup \partial B$,
by an integration by part, the previous expression is equal
\begin{equation*}
\int_{\Omega} h \, e^V \nabla \cdot (e^{-V}\varphi) \, d\mu  
\;-\; \oint_{\partial A} \, \varphi \cdot  n_A \, d\mu_A  
\;+\; \alpha\,\Cap(A,B) \;.
\end{equation*}
By \eqref{e:fc}, this expression is equal to $\gamma +
\alpha\,\Cap(A,B)$, as claimed.
\end{proof}

\begin{theorem}[Dirichlet principle]
\label{th2-1}
Fix two disjoint subsets $A$, $B$ of $\bb T^d$ satisfying
\eqref{2-15}. Then,
\begin{equation*}
\Cap(A,B)\;=\; \inf_{f\in \mc C_{1,0}}\inf_{\varphi \in \mc F_0} 
\langle \Phi_f-\varphi,\Phi_f-\varphi\rangle\;,
\end{equation*}
and the infimum is attained for $h_\star = (1/2) (h+h^*)$ and
$\varphi_\star = \Phi_{h_\star}- \Psi_h$\, .
\end{theorem}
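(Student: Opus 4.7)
The argument naturally splits into a lower bound via Cauchy--Schwarz and the exhibition of a saturating pair.

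For the lower bound, fix any admissible pair $(f,\varphi) \in \mc C_{1,0}\times \mc F_0$. Lemma \ref{l2-03} with $\alpha=1$, $\gamma=0$ gives $\langle \Phi_f - \varphi,\Psi_h\rangle = \Cap(A,B)$. Combining this identity with the Cauchy--Schwarz inequality on $\mc F$ and with $\langle \Psi_h,\Psi_h\rangle = \Cap(A,B)$ from \eqref{2-06},
\begin{equation*}
\Cap(A,B)^2 \;=\; \langle \Phi_f - \varphi,\Psi_h\rangle^2 \;\le\; \langle \Phi_f - \varphi,\Phi_f - \varphi\rangle\,\Cap(A,B),
\end{equation*}
so (assuming $\Cap(A,B)>0$) $\Cap(A,B) \le \langle \Phi_f - \varphi,\Phi_f - \varphi\rangle$, as required.

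For the upper bound, I would set $h_\star = (h+h^*)/2$ and $\varphi_\star = \Phi_{h_\star}-\Psi_h$. The value at this pair is essentially by definition: $\Phi_{h_\star}-\varphi_\star = \Psi_h$, hence \eqref{2-06} immediately yields $\langle \Phi_{h_\star}-\varphi_\star,\Phi_{h_\star}-\varphi_\star\rangle = \Cap(A,B)$. Moreover $h_\star \in \mc C_{1,0}$ since $h$ and $h^*$ both coincide with $\chi_A$ on $A\cup B$. What truly requires work is membership $\varphi_\star \in \mc F_0$, which amounts to verifying the two constraints in \eqref{e:fc}.

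The main obstacle is precisely this verification. For the divergence condition, I would use \eqref{2-07} and the expression \eqref{2-8} for $\mc L^s$ to rewrite
\begin{equation*}
\nabla\cdot(e^{-V}\varphi_\star) \;=\; e^{-V}\bigl(\mc L^{*} h_\star - \mc L^{s} h\bigr),
\end{equation*}
and observe that on $\Omega$ both $\mc L^{*} h_\star$ and $\mc L^{s} h$ equal $\tfrac12 \mc L^{*} h$, because $\mc L h = 0$ and $\mc L^{*} h^{*} = 0$ there; this is the happy cancellation that motivates the symmetrized choice $h_\star = (h+h^*)/2$. For the flux condition, I would expand $\varphi_\star$ on $\partial A$ using $h_\star \equiv 1$ there, apply \eqref{2-09} to the $a\nabla h$ and $a^{\dagger}\nabla h^{*}$ pieces, and invoke Lemma \ref{l2-01} (for $\Cap = \Cap^{*}$) together with \eqref{2-05} applied to $D$ equal to the interior of $A$ to eliminate the remaining $c\cdot n_A$ term; the three contributions then cancel and the flux is zero. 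These two checks complete the proof and identify $(h_\star,\varphi_\star)$ as the minimizer.
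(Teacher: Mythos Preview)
Your proof is correct and follows essentially the same route as the paper: Cauchy--Schwarz against $\Psi_h$ via Lemma~\ref{l2-03} for the lower bound, then verification that $(h_\star,\varphi_\star)$ is admissible and saturates. Your membership checks are packaged slightly differently---you write the divergence as $e^{-V}(\mc L^*h_\star-\mc L^s h)$ and handle the flux via \eqref{2-09} plus \eqref{2-05}, whereas the paper first rewrites $\varphi_\star=\tfrac12(a^\dagger\nabla h^*-h^*c)-\tfrac12(a\nabla h+hc)$ and then invokes \eqref{2-14} and \eqref{2-08}---but the content is the same.
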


\begin{proof}
Fix $f\in \mc C_{1,0}$ and $\varphi \in \mc F_0$.  By
Lemma~\ref{l2-03}, applied with $\gamma=0$ and $\alpha=1$, and by
Schwarz inequality,
\begin{equation*}
\Cap(A,B)^2 \; =\; \langle \Phi_f-\varphi \,,\, \Psi_h\rangle^2 
\;\le\; \langle \Phi_f-\varphi \,,\, \Phi_f-\varphi\rangle \,
\langle \Psi_h, \Psi_h\rangle  \;.
\end{equation*}
By \eqref{2-06}, the last term is equal to $\Cap(A,B)$, so that
$\Cap(A,B)\le \langle \Phi_f-\varphi,\Phi_f-\varphi\rangle$ for every
$f$ in $\mc C_{1,0}$ and $\varphi$ in $\mc F_0$.

Recall from the statement of the theorem the definition of $h_\star$
and $\varphi_\star$. Since $\Phi_{h_\star} - \varphi_\star =
\Psi_{h}$, by \eqref{2-06}, $\Cap(A,B)=\langle \Phi_{h_\star} -
\varphi_\star \,,\, \Phi_{h_\star} - \varphi_\star\rangle$. Therefore,
to complete the proof of the theorem, it remains to check that
$h_\star$ belongs to $\mc C_{1,0}$, and $\varphi_\star$ to $\mc
F_0$. It is easy to check the first condition. For the second one,
observe that
\begin{equation*}
\nabla \cdot (e^{-V} \varphi_\star) \;=\;
\frac 12 \, e^{-V}\, \left( \mc L^* h^* -\mc L h\right)
\;-\; \frac 12 \, (h + h^*)\, \nabla \cdot (e^{-V} c) \;.
\end{equation*}
This expression vanishes on $\Omega$ by the harmonicity of $h$, $h^*$
and in view of \eqref{2-07}. On the other hand,
\begin{gather*}
\oint_{\partial A} \varphi_\star \cdot n_A \, d\mu_A \;=\;
\frac 12 \,  \oint_{\partial A} (a^\dagger \, \nabla h^* - h^* c) \cdot n_A \, d\mu_A 
\;-\; \frac 12 \, 
\oint_{\partial A} (a\, \nabla h + h \, c) \cdot n_A \, d\mu_A \;. 
\end{gather*}
By Lemma \ref{l2-01} and identity \eqref{2-08}, the previous
expression is equal to $(1/2) \{\Cap^*(A,B)-\Cap(A,B)\}= 0$, which
completes the proof of the theorem.
\end{proof}

\begin{theorem}[Thomson principle]
\label{th2-2}
Fix two disjoint subsets $A$, $B$ of $\bb T^d$ satisfying
\eqref{2-15}. Then,
\begin{equation*}
\frac 1{\Cap(A,B)}\;=\; \inf_{\varphi \in \mc F_1} \inf_{f\in \mc C_{0,0}} \,
\langle \Phi_f-\varphi \,,\, \Phi_f-\varphi\rangle\;.
\end{equation*}
Moreover, the infimum is attained at $h_\star=(h-h^*)/2\, \Cap(A,B)$
and $\varphi_\star=\Phi_{h_\star}- \Psi_{h/\Cap(A,B)}$.
\end{theorem}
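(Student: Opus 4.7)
The strategy is to mirror the proof of Theorem~\ref{th2-1}, this time invoking Lemma~\ref{l2-03} with $\gamma=1$ and $\alpha=0$. For any $\varphi\in\mc F_1$ and $f\in\mc C_{0,0}$, the lemma gives $\<\Phi_f-\varphi,\Psi_h\> = 1$, so Schwarz's inequality together with \eqref{2-06} yields
\begin{equation*}
1\;=\;\<\Phi_f-\varphi,\Psi_h\>^2\;\le\;\<\Phi_f-\varphi,\Phi_f-\varphi\>\,\Cap(A,B)\;,
\end{equation*}
which is the desired lower bound $\<\Phi_f-\varphi,\Phi_f-\varphi\>\ge 1/\Cap(A,B)$.

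Next I would show that the proposed candidate realizes equality. Since $\Phi_{h_\star}-\varphi_\star = \Psi_{h/\Cap(A,B)}$ by the very definition of $\varphi_\star$, formula \eqref{2-06} gives directly $\<\Phi_{h_\star}-\varphi_\star,\Phi_{h_\star}-\varphi_\star\> = \Cap(A,B)/\Cap(A,B)^2 = 1/\Cap(A,B)$. The admissibility $h_\star\in\mc C_{0,0}$ is immediate, since $h=h^*=\chi_A$ on $A\cup B$.

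The substantive step is verifying that $\varphi_\star\in\mc F_1$. For the divergence-free condition I would first record the identities
\begin{equation*}
\nabla\cdot(e^{-V}\Phi_f)\;=\;e^{-V}\mc L^* f\;,\qquad \nabla\cdot(e^{-V}\Psi_f)\;=\;e^{-V}\mc L^s f\;,
\end{equation*}
which follow from the explicit form of $\mc L^*$, from \eqref{2-8}, and from \eqref{2-07}. On $\Omega$ the harmonicity of $h$ and $h^*$ forces $\mc L^s h = \mc L^* h/2$ and $\mc L^* h_\star = \mc L^* h/[2\,\Cap(A,B)]$, so the two contributions to $\nabla\cdot(e^{-V}\varphi_\star)$ cancel. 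The crucial step --- and the one that genuinely distinguishes the Thomson principle from the Dirichlet principle --- is the flux identity $\oint_{\partial A}\varphi_\star\cdot n_A\,d\mu_A = -1$: since $h_\star$ vanishes on $\partial A$ the term $-h_\star c$ drops, so one is left with $\oint_{\partial A} a^\dagger\nabla h_\star\cdot n_A\,d\mu_A - \Cap(A,B)^{-1}\oint_{\partial A} a_s\nabla h\cdot n_A\,d\mu_A$. Expanding $a_s = (a+a^\dagger)/2$, the two occurrences of $a^\dagger\nabla h$ cancel, and the remainder reduces to $-[\Cap(A,B)+\Cap^*(A,B)]/[2\,\Cap(A,B)]$, which equals $-1$ by the identity $\Cap(A,B) = \Cap^*(A,B)$ from Lemma~\ref{l2-01}. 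This last appeal to $\Cap = \Cap^*$ is what forces the symmetric choice $h_\star = (h-h^*)/[2\,\Cap(A,B)]$ and is the chief obstacle in the argument.
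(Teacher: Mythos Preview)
Your proof is correct and follows essentially the same route as the paper: the lower bound via Lemma~\ref{l2-03} with $\gamma=1$, $\alpha=0$ and the identification $\Phi_{h_\star}-\varphi_\star=\Psi_{h/\Cap(A,B)}$ are identical, and your verification that $\varphi_\star\in\mc F_1$ differs only cosmetically---the paper first rewrites $\varphi_\star$ as $-[2\Cap(A,B)]^{-1}\{(a\nabla h+hc)+(a^\dagger\nabla h^*-h^*c)\}$ and then invokes \eqref{2-14} and \eqref{2-08}, whereas you work directly with the operator identities for $\Phi$ and $\Psi$ and the definition \eqref{2-09}, but both computations reduce to the same appeal to $\Cap(A,B)=\Cap^*(A,B)$ from Lemma~\ref{l2-01}.
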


\begin{proof}
Fix $\varphi$ in $\mc F_1$ and $f$ in $\mc C_{0,0}$.  By
Lemma~\ref{l2-03} (applied with $\alpha=0$ and $\gamma=1$), by
Schwarz inequality, and by \eqref{2-06},
\begin{equation*}
1\,=\, \langle \Phi_f-\varphi \,,\, \Psi_h\rangle^2 
\;\le\; \langle \Phi_f-\varphi \,,\, \Phi_f-\varphi\rangle \,
\langle \Psi_h \,,\,  \Psi_h\rangle
\,=\, \langle \Phi_f-\varphi \,,\, \Phi_f-\varphi\rangle \,
\Cap(A,B)\, .
\end{equation*}
By definition of $h_\star$, $\varphi_\star$,
$\Phi_{h_\star}-\varphi_\star = \Psi_{h/\Cap(A,B)}$, so that by
\eqref{2-06}, 
\begin{equation*}
\langle \Phi_{h_\star}-\varphi_\star,\Phi_{h_\star}- \varphi_\star\rangle  
\;=\;  \< \Psi_{h/\Cap(A,B)} \,,\, \Psi_{h/\Cap(A,B)} \> 
\;=\; 1/\Cap(A,B)\;.
\end{equation*}
It remains to check that $h_\star\in \mc C_{0,0}$, and $\varphi_\star
\in \mc F_1$. It is easy to verify the first condition. For the second
one, observe that
\begin{equation*}
\varphi_\star \;=\; \frac{-\, 1}{2\, \Cap (A,B)}\, \Big\{ \big[\, a\,
\nabla h \,+\, h\, c \, \big] + \big[\, a^\dagger \, \nabla h^* \,-\, h^*\,
c\, \big]\, \Big\}\; .
\end{equation*}
Therefore, $2\, \Cap (A,B) \, \nabla \cdot (e^{-V} \varphi_\star) = -
\, e^{-V} [\mc L h +\mc L^* h^*]=0$ on $\Omega$. Moreover,
\begin{align*}
& -\, 2\, \Cap(A,B) \, \oint_{\partial A} \varphi_\star \cdot n_A\, d\mu_A
\\
&\quad \;=\; 
\oint_{\partial A} (\, a\, \nabla h + h c \,) \cdot n_A \, d\mu_A \;+\; 
\oint_{\partial A} (\, a^\dagger \,\nabla h^* - h^* c \,) \cdot n_A \, d\mu_A 
\;. 
\end{align*}
By Lemma \ref{l2-01} and \eqref{2-08}, the right-hand side is equal to
$\Cap(A,B)+\Cap^*(A,B)= 2 \, \Cap(A,B)$. This proves that
$\varphi_\star$ belongs to $\mc F_1$, and completes the proof of the
theorem.
\end{proof}

\smallskip\noindent{\bf Reversible case.} In the reversible case,
$c=0$, $a$ symmetric, the optimal flow $\varphi$ in the Dirichlet
principle is the null one, so that
\begin{equation}
\label{2-9}
\Cap(A,B)\;=\; \inf_{f\in \mc C_{1,0}}
\langle \Phi_f ,\Phi_f \rangle \;=\; \inf_{f\in \mc C_{1,0}}
\int_\Omega \nabla f \cdot a\, \nabla f \, d\mu\;.
\end{equation}
In the last identity we used the fact that $\Phi_f = \Psi_f = a\nabla
f$. We thus recover the Dirichlet principle in the reversible context.

Similarly, in the reversible case, the optimal function $f$ in the
Thomson principle is the null one, so that
\begin{equation*}
\frac 1{\Cap(A,B)} \;=\; \inf_{\varphi \in \mc F_1} \,
\langle \varphi \,,\, \varphi\rangle\;,
\end{equation*}
which is the Thomson's principle in the reversible case.

We conclude this subsection comparing the capacity induced by the
generator $\mc L$ with the one induced by the symmetric part of the
generator, $\mc L^s$ given by \eqref{2-8}.

Fix two disjoint subsets $A$, $B$ satisfying \eqref{2-15}.  Denote by
$\Cap_s(A,B)$, the capacity between $A$ and $B$ induced $\mc
L^s$. Since $h$ belongs to $\mc C_{1,0}$, by \eqref{2-02} and
\eqref{2-9}, 
\begin{equation*}
\Cap_s (A,B) \;\le\; \Cap(A,B)\;.
\end{equation*}

In the case of Markov chains taking value in a countable state-space,
it is proved in Lemma 2.6 of \cite{GL} that if the generator satisfies
a sector condition with constant $C_0$,
\begin{equation*}
\Big( \int  (\mc L f) \, g \, d\mu \Big)^2 \;\le\; C_0 \,
\int  (-\, \mc L f) \, f \, d\mu \, \int  (-\, \mc L g) \, g \, d\mu\;, 
\end{equation*}
for all smooth functions $f$, $g$, then $\Cap (A,B) \le C_0\,
\Cap_s(A,B)$.

\smallskip\noindent{\bf Stochastic representation.} The operators $\mc
L$ and $\mc L^*$ are generators of Markov processes on $\bb T^d$ with
invariant measure $\mu$. More precisely, $\mc L$ is the generator of
the solution of the stochastic differential equation
\begin{equation}
\label{2-13}
dX_{t}\;=\; \big\{ -\, a^\dagger\, \nabla V \,+\, \nabla\cdot a \,+\,
c\} (X_t) \, dt \,+\, \sqrt{2\, a_s} \, dW_{t}\;,
\end{equation}
where $W_{t}$ is a standard $d$-dimensional Brownian motion, $\sqrt{2
  a_s}$ represents the symmetric, positive-definite square root of $2
a_s$, and $\nabla\cdot a$ is the vector field whose $j$-th coordinate
is $(\nabla \cdot a)_j = \sum_{1\le i\le d} \partial_{x_i} a_{i,j}$.
For $\mc L^*$, one has to replace the drift in \eqref{2-13} by $-\,
a\, \nabla V \,+\, \nabla\cdot a^\dagger \,-\, c$.

Denote by $C([0,+\infty);\bb T^d)$ the space of continuous functions
$X : [0,+\infty) \to \bb T^d$ endowed with the topology of locally
uniform convergence. Let $\{\bb P_x : x\in \bb T^d\}$, resp.  $\{\bb
P^*_x : x\in \bb T^d\}$, be the probability measures on
$C([0,+\infty);\bb T^d)$ induced by the Markov process associated to
the generator $\mc L$, resp. $\mc L^*$, starting from $x$.

Denote by $H_C$, $C$ a closed subset of $\bb T^d$, the hitting time of
$C$:
\begin{equation*}
H_C \;=\; \inf\{t\ge 0 : X_t \in C\}\;. 
\end{equation*}

\begin{lemma}
\label{l2-02}
Let $C$ be the closure of an open set with smooth boundary. Consider
two continuous functions $b$, $f:\bb T^d\to \bb R$. Let $u:\bb T^d\to
\bb R$ be given by
\begin{equation*}
u(x) \;:=\; \bb E_x\Big[ \, b(X(H_C)) \;+\; \int_0^{H_C} f(X(t))\,dt\, \Big]\;.
\end{equation*}
Then, $u$ is the unique solution to
\begin{equation}
\label{2-10}
\begin{cases}
\mc Lu= -\, f & \text{on $\bb T^d\setminus C$,} \\
u=b & \text{on $\partial C$.}
\end{cases}
\end{equation}
\end{lemma}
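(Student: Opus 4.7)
The strategy is the classical probabilistic representation for solutions of elliptic Dirichlet problems. First I would invoke the linear elliptic PDE theory of \cite{gt} (already cited above for \eqref{2-16}) to produce a classical solution $v\in C^2(\bb T^d\setminus C)\cap C(\bb T^d\setminus \mathrm{int}(C))$ of the boundary-value problem \eqref{2-10}: the operator $\mc L$ has smooth coefficients and is uniformly elliptic by \eqref{19}, the domain $\bb T^d\setminus C$ is bounded with smooth boundary, and the data $b$, $f$ are continuous, so a unique classical solution exists. The remaining task is to identify this $v$ with $u$ via Itô's formula, and to handle uniqueness via the maximum principle.

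\textbf{Identification step.} Extend $v$ continuously to all of $\bb T^d$ and apply Itô's formula to $v(X_t)$ along the SDE \eqref{2-13}, stopped at the bounded time $H_C\wedge T$:
\begin{equation*}
v(X_{H_C\wedge T}) \;=\; v(x) \;+\; \int_0^{H_C\wedge T} (\mc L v)(X_s)\, ds \;+\; M_{H_C\wedge T}\;,
\end{equation*}
where $M_t=\int_0^t \nabla v(X_s)\cdot \sqrt{2a_s(X_s)}\, dW_s$. Since $\nabla v$ is bounded on $\bb T^d\setminus\mathrm{int}(C)$ and $a_s$ is bounded on $\bb T^d$, $M$ is a genuine $L^2$-martingale up to any finite time; taking $\bb P_x$-expectation and using $\mc L v=-f$ on $\bb T^d\setminus C$ gives
\begin{equation*}
\bb E_x\big[ v(X_{H_C\wedge T})\big] \;=\; v(x) \;-\; \bb E_x\Big[\int_0^{H_C\wedge T} f(X_s)\, ds\Big]\;.
\end{equation*}

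\textbf{Passage to the limit and uniqueness.} Because $\mc L$ is uniformly elliptic with smooth coefficients on the compact torus and $C$ has non-empty interior, a standard Has'minskii/Lyapunov argument yields $\bb E_x[H_C]<\infty$ for every $x$ (indeed $H_C$ has exponential moments). Since $v$, $b$ and $f$ are bounded and $X_{H_C\wedge T}\to X_{H_C}$ $\bb P_x$-almost surely with $v(X_{H_C})=b(X_{H_C})$, dominated convergence as $T\to\infty$ produces $v(x)=\bb E_x[b(X_{H_C})+\int_0^{H_C}f(X_s)\, ds]=u(x)$. For uniqueness, the difference of two solutions of \eqref{2-10} is $\mc L$-harmonic on $\bb T^d\setminus C$ with zero boundary values on $\partial C$, so the strong maximum principle for uniformly elliptic operators forces it to vanish identically.

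The principal technical obstacle is the passage to $T\to\infty$: it rests on the two estimates that $\bb E_x[H_C]<\infty$ and that $v$ is bounded up to the boundary. Both are classical consequences of uniform ellipticity, compactness of $\bb T^d$, and smoothness of $\partial C$, but they are the only non-formal ingredients in the argument and must be quoted from standard references (\cite{gt} and e.g. Has'minskii's recurrence criterion) to match the paper's level of rigor.
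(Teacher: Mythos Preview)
The paper does not supply a proof of this lemma; it is quoted as a classical fact and immediately used to interpret the equilibrium potentials $h$ and $h^*$ probabilistically. Your outline is exactly the standard argument one would give if asked to justify it, and it is essentially correct.

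One technical point deserves attention. You assert that ``$\nabla v$ is bounded on $\bb T^d\setminus\mathrm{int}(C)$'' in order to conclude that the stochastic integral $M_t$ is a true martingale. With merely continuous boundary data $b$ and continuous right-hand side $f$, the solution furnished by \cite{gt} lies in $C^2(\bb T^d\setminus C)\cap C(\overline{\bb T^d\setminus C})$, but there is no reason for $\nabla v$ to remain bounded up to $\partial C$; Schauder-type gradient bounds require H\"older regularity of the data. The remedy is routine: exhaust $\bb T^d\setminus C$ by an increasing sequence of smooth subdomains $\Omega_n\Subset \bb T^d\setminus C$, apply It\^o's formula up to the exit time from $\Omega_n$ (where $\nabla v$ is bounded), take expectations, and then let $n\to\infty$ using the continuity of $v$ up to $\partial C$ together with $\bb E_x[H_C]<\infty$. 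Since the paper itself explicitly disclaims precision on smoothness hypotheses, this is a cosmetic rather than a substantive gap, but as written the sentence is not quite justified.
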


This result provides a stochastic representation for the harmonic
functions $h=h_{A,B}$, $h^* = h^*_{A,B}$ introduced previously:
\begin{equation*}
h(x) \;=\; \bb P_x [H_A<H_B] \;, \quad 
h^*(x) \;=\; \bb P^*_x [H_A<H_B] \;.
\end{equation*}

\smallskip\noindent{\bf Harmonic measure.} In view of the definition
\eqref{2-09} of the capacity, define the probability measure
$\nu\equiv \nu_{A,B}$ as the harmonic measure on $\partial A
\cup \partial B$ conditioned to $\partial A$ as
\begin{equation*}
d\nu: \;=\; \frac{1}{\Cap(A,B)} a^\dagger \, \nabla h^* \cdot n_A \, d\mu_A\;.
\end{equation*}

\begin{proposition}
\label{p2-1}
For each continuous function $f:\bb T^d \to \bb R$,
\begin{equation}
\label{2-11}
\bb E_{\nu}\Big[ \int_0^{H_B}f(X_t)\, dt\Big] \;=\;
\frac{1}{\Cap(A,B)}\int h^*\,f\,d\mu\;.
\end{equation}
\end{proposition}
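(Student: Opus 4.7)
The plan is to combine a Feynman--Kac representation of the time integral with a Green-type identity between $u$ and $h^*$. Applying Lemma~\ref{l2-02} with $C = B$, $b \equiv 0$, the function
\[
u(x) \;:=\; \bb E_x\Big[\int_0^{H_B} f(X_t)\, dt\Big]
\]
is the unique solution to $\mc L u = -f$ on $\bb T^d\setminus B$ with $u = 0$ on $\partial B$, and in particular $u$ is smooth up to $\partial A$ and up to $\partial B$ from inside $\bb T^d\setminus B$. Since $\nu$ is supported on $\partial A$, the left-hand side of \eqref{2-11} equals
\[
\int u\, d\nu \;=\; \frac{1}{\Cap(A,B)}\oint_{\partial A} u\, a^\dagger \nabla h^* \cdot n_A\, d\mu_A,
\]
so it suffices to prove that $\oint_{\partial A} u\, a^\dagger \nabla h^* \cdot n_A\, d\mu_A = \int_{\bb T^d} h^* f\, d\mu$.

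For this I would first run Green's identity on the annular region $\Omega = \bb T^d\setminus (A\cup B)$. For smooth $u$, $w$ defined near $\overline{\Omega}$, integration by parts together with the symmetry $\nabla u \cdot a^\dagger \nabla w = \nabla w \cdot a\, \nabla u$ and the skew-adjointness identity \eqref{2-07} yield
\[
\int_\Omega \big(u\,\mc L^* w - w\,\mc L u\big)\, d\mu \;=\; \oint_{\partial\Omega}\big[u\, a^\dagger \nabla w - w\, a\, \nabla u - uw\, c\big]\cdot n_{\mathrm{out}}\, d\mu_{\partial\Omega},
\]
where $n_{\mathrm{out}}$ denotes the outward normal of $\Omega$. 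Setting $w = h^*$, using $\mc L^* h^* = 0$ on $\Omega$, $\mc L u = -f$ on $\Omega$, and observing that $n_{\mathrm{out}} = n_A$ on $\partial A$ (where $h^* \equiv 1$) while $u = h^* = 0$ on $\partial B$, the contribution over $\partial B$ disappears and the formula collapses to
\[
\int_\Omega h^* f\, d\mu \;=\; \oint_{\partial A}\big[u\, a^\dagger \nabla h^* - a\, \nabla u - u\, c\big]\cdot n_A\, d\mu_A.
\]

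It remains to identify $\oint_{\partial A}(a\, \nabla u + u\, c)\cdot n_A\, d\mu_A$. Since $A \subset \bb T^d\setminus B$, the equation $\mc L u = -f$ holds throughout $A$, so I would integrate it against $\mu$ over $A$, apply the divergence theorem to $\nabla\cdot(e^{-V} a\, \nabla u)$, and use \eqref{2-07} to turn $\int_A c\cdot\nabla u\, d\mu$ into a boundary integral (the outward normal of $A$ on $\partial A$ being $-n_A$), which gives
\[
\oint_{\partial A}\big(a\, \nabla u + u\, c\big)\cdot n_A\, d\mu_A \;=\; \int_A f\, d\mu.
\]
Plugging this into the previous display and recalling that $h^* \equiv 1$ on $A$ and $h^* \equiv 0$ on $B$ produces
\[
\oint_{\partial A} u\, a^\dagger \nabla h^* \cdot n_A\, d\mu_A \;=\; \int_A f\, d\mu + \int_\Omega h^* f\, d\mu \;=\; \int_{\bb T^d} h^* f\, d\mu,
\]
which is \eqref{2-11}.

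The main obstacle is the careful bookkeeping of inward versus outward normals across $\partial A$ and $\partial B$ and of the piecewise definitions of $h^*$; the non-reversible drift terms involving $c$ threaten to spoil the clean Green identity, but the skew-adjointness relation \eqref{2-07} together with the prescribed boundary values $h^* \in \{0,1\}$ on $\partial A\cup\partial B$ is exactly what makes every $c$-contribution collapse to the required form.
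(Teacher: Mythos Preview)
Your proof is correct and follows essentially the same route as the paper: both start from the Feynman--Kac representation of $u$ via Lemma~\ref{l2-02}, express the left-hand side as the boundary integral $\oint_{\partial A} u\,a^\dagger\nabla h^*\cdot n_A\,d\mu_A$, and then reduce to $\int h^* f\,d\mu$ through integration by parts using $\mc L^* h^*=0$, $\mc L u=-f$, and the skew-adjointness identity \eqref{2-07}. The only organisational difference is that you package the computation as a single Green identity on $\Omega$ and then treat the solid region $A$ separately, whereas the paper performs the integrations by parts one at a time and instead extends the domain from $\Omega$ to $\Omega_B=\bb T^d\setminus B$ (exploiting that $\nabla h^*$ vanishes on $A$) before invoking $\mc L u=-f$ there; the two bookkeepings are equivalent.
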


\begin{proof}
Fix a continuous function $f$, and let $\Omega_B = \bb T^d \setminus
B$. Denote by $u$ the unique solution of the elliptic equation
\eqref{2-10} with $C=B$, $b\equiv 0$.  In view of Lemma \ref{l2-02}
and by definition of the harmonic measure $\nu$, the left hand
side of \eqref{2-11} is equal to
\begin{equation*}
\frac {1}{\Cap (A,B)} \oint_{\partial A} u \, [a^\dagger\, 
\nabla h^* ] \cdot n_{A} \, d\mu_A  \;.
\end{equation*}
The integral of the same expression at $\partial B$ vanishes because
$u$ vanishes on $\partial B$. Hence, by the divergence theorem, this
expression is equal to
\begin{equation*}
\frac {1} {\Cap (A,B)} \int_{\Omega} e^{V} \, \nabla \cdot 
\big\{e^{-V} \, [a^\dagger \, \nabla h^* ] \, u
\big\} \, d\mu  \;.
\end{equation*}
Since the equilibrium potential $h^*$ is harmonic on $\Omega$,
the previous equation is equal to
\begin{align*}
\frac {1} {\Cap (A,B)} \int_{\Omega} 
\nabla h^*  \, a \, \nabla u \, d\mu 
\;+\; \frac {1} {\Cap (A,B)} \int_{\Omega} 
u \, c\cdot \nabla h^* \, d\mu \;.
\end{align*}
Consider the first term. Since $\nabla h^*$ vanishes on $A$, we may
extend the integral to $\Omega_B = \bb T^d \setminus B$.  By the
divergence theorem and since the equilibrium potential $h^*$ vanishes
on $\partial B$, this expression is equal to
\begin{equation*}
- \; \frac {1} {\Cap (A,B)} \int_{\Omega_B} 
h^*\,  e^V\, \nabla \cdot \big\{ e^{-V} \, a \, 
\nabla u \big\} \, d\mu \;.
\end{equation*}
As $\mc L u = -f$ on $\Omega_B$, this expression is equal to
\begin{equation*}
\frac {1} {\Cap (A,B)} \int_{\Omega_B} h^* \, c \cdot \nabla u\, d\mu 
\;+\; \frac {1} {\Cap (A,B)} \int_{\Omega_B} h^*\, f \, d\mu \;.
\end{equation*}
Since the equilibrium potential $h^*$ vanishes on $B$, we may replace
$\Omega_B$ by $\bb T^d$ in the last integral.

Up to this point we proved that the left-hand side of \eqref{2-11} is
equal to
\begin{equation*}
\frac {1} {\Cap (A,B)} \Big\{ \int_{\bb T^d } h^*\, f \, d\mu
\;+\; \int_{\Omega_B} h^* \, c \cdot \nabla u\, d\mu 
\;+\; \int_{\Omega} u \, c\cdot \nabla h^* \, d\mu \Big\} \;. 
\end{equation*}
Since $\nabla h^* $ vanishes on $A\cup B$ and $h^*$ on $B$, we may
extend the last two integrals to $\bb T^d$. By \eqref{2-01}, the sum
of the last two terms vanishes, which completes the proof of the
proposition.
\end{proof}

Proposition \ref{p2-1} is due to Bovier, Eckhoff, Gayrard and Klein
\cite{BEGK01} for reversible Markov chains. A generalization to
non-reversible chains can be found in \cite{BL7}.  A Dirichlet
principle, as a double variational formula of type $\inf_f \sup_g$
involving functions, was proved by Pinsky \cite{p1, p2, p95} in the
context of diffusions. It has been derived by Doyle \cite{Do94} and by
Gaudilli\`ere and L. for Markov chains \cite{GL}. The Dirichlet
principle, stated in Theorem \ref{th2-1}, appeared in \cite{GL} for
Markov chains and is due to L., Mariani and Seo \cite{lms17} in the
context of diffusion processes. The Thomson principle, stated in
Theorem \ref{th2-2}, is due to Slowik \cite{Slo} in the context of
Markov chains and appeared in \cite{lms17} for diffusions.

\section{Recurrence of Markov chains}
\label{sec04}

The capacity is an effective tool to prove the recurrence or
transience of Markov processes whose stationary state are explicitly
known.

Consider the following open problem. Let $\bs X = \{(X_k,Y_k) : k\in
\bb Z\}$, be a sequence of independent, identically distributed random
variables such that $P[(X_0,Y_0)=(\pm 1, \pm 1)] = 1/4$ for all $4$
combinations of signs.  Given a random environment $\bs X$ consider
the discrete-time random walk on $\bb Z^2$ whose jump probabilities
are given by
\begin{equation}
\label{4-1}
p\big( (j,k) \,,\, (j+Y_k,k) \big) \;=\; 
p\big( (j,k) \,,\, (j,k+X_j) \big) \;=\; 1/2 \;\text{ for all }
(j,k) \in\bb Z^2\;.
\end{equation}
Denote by $Z_t = (Z^1_t, Z^2_t)$ the position at time $t\in \bb Z$ of
the random walk. Equation \eqref{4-1} states that in the horizontal
line $\{(p,q) : q=k\}$ $Z$ only jumps from $(j,k)$ to $(j+Y_k,k)$ for
all $j$. In other words, on each horizontal line the random walk is
totally asymmetric, but the direction of the jumps may be differ from
line to line. Similarly, on the vertical lines $\{(p,q) : p=j\}$ the
random walk is totally asymmetric and only jumps from $(j,k)$ to
$(j,k+X_j)$. It is not known if this random walk is recurrent or not
[almost surely with respect to the random environment].

Fix an environment $\bs X$, and let $\bs P^{\bs X}_{(j,k)}$ be the
distribution of the random walk $Z$ which starts at time $t=0$ from
$(j,k)$. Denote by $H^+_0$ the return time to $0$: $H^+_0 = \inf\{
t\ge 1 : Z_t = 0\}$. The random walk is recurrent if and only if $\bs
P^{\bs X}_{0} [H^+_0 = \infty]=0$. Let $\{B_N : N\ge 1\}$ be an increasing
sequence of finite sets such that $\cup_N B_N = \bb Z^2$, and note
that
\begin{equation}
\label{4-2}
\bs P^{\bs X}_{0} [H^+_0 = \infty] \;=\; \lim_{N\to\infty}
\bs P^{\bs X}_{0} [H_{B^c_N} < H^+_0] \;,
\end{equation}
where $H_{B^c_N}$ stands for the hitting time of $B^c_N$. 

In the context of discrete-time Markov chains evolving on a countable
state-space the capacity between two disjoint sets $A$, $B$ is given
by
\begin{equation*}
\Cap (A,B) \;=\; \sum_{x\in A}  M(x)\, P_{x} [H_B < H^+_A] \;,
\end{equation*}
where $M$ represents the stationary state of the Markov chain and
$H_B$, resp. $H^+_A$, the hitting time of the set $B$, resp. the
return time to the set $A$.

By the previous identity, the right-hand side of \eqref{4-2} can be
rewritten as
\begin{equation*}
\frac 1{ M_{\bs X}(0)} \, \lim_{N\to\infty} \Cap_{\bs X} \big(\{0\}\,,\, B^c_N
\big) \;,
\end{equation*}
where $M_{\bs X}$ represents the stationary state of the random
walk. It is easy to show that $M_{\bs X}$ does not depend on the
environment and is constant, $M_{\bs X}(z) =1$ for all $z\in \bb Z^2$.

In view of the Dirichlet principle, to prove that the random walk is
recurrent, one needs to find a sequence of functions $f_N$ in $\mc
C_{1,0}$ and of vector fields $\varphi_N$ in $\mc F_0$ [with $A=\{0\}$
$B=B^c_N$ and depending on the environment $\bs X$] for which
$\<\Phi_{f_N} - \varphi_N, \Phi_{f_N} - \varphi_N\>$ vanishes
asymptotically.

This has not been achieved yet. However, this is the simplest way to
prove that the symmetric, nearest-neighbor random walk on $\bb Z^2$ is
recurrent [$p((j,k),(j,k\pm 1)) = p((j,k),(j\pm 1,k)) = 1/4$]. In this
case also $M(z) =1$ for all $z\in \bb Z^2$. Consider $B_N = \{-(N-1),
\dots, N-1\}^2$, and set $\varphi_N=0$, $f_N(x) = 1 - \log |x|_m/\log
N$, $x\in B_N$, where $|0|_m =1$, $|(j,k)| = \max \{|j|,|k|\}$. For
these sequences,
\begin{equation*}
\<\Phi_{f_N} - \varphi_N, \Phi_{f_N} - \varphi_N\>  \;=\;
\frac 14 \sum_{j=1}^2 \sum_{x\in \bb Z^2} [f_N(x+e_j) - f_N(x)]^2 \;\le\;
\frac{C_0}{\log N}\;,
\end{equation*}
where $\{e_1, e_2\}$ stands for the canonical basis of $\bb R^2$ and
$C_0$ for a finite constant independent of $N$. This proves that the
$2$-dimensional, nearest-neighbor, symmetric random walk is recurrent.

\section{Eyring-Kramers formula for the transition time}
\label{sec05}

We examine in this section the stochastic differential equation
\eqref{2-13} as a small perturbation of a dynamical system $\dot x (t)
= F(x(t))$, by introducing a small parameter $\epsilon>0$ in the
equation.

To reduce the noise in \eqref{2-13}, we substitute $\sqrt{2\, a_s}$ in
the second term of the right-hand side by $\sqrt{2\, \epsilon\, a_s}$.
At this point, to keep the structure of the equation, we have to
replace in the first term $a$ by $\epsilon\, a$. To avoid the term
$-\, a^\dagger \nabla V$ to become small, we change $V$ to
$V/\epsilon$. After these modifications the equation \eqref{2-13}
becomes
\begin{equation}
\label{3-2}
dX_{t}^\epsilon\;=\;  
\big\{ -\, a^\dagger\, \nabla V \,+\, \epsilon \, \nabla\cdot a \,+\, c\,
\} \, (X ^\epsilon_t) \, dt
\,+\, \sqrt{2\, \epsilon\, a_s} \, dW_{t}\;.
\end{equation}
The diffusion $X_{t}^\epsilon$ is a small perturbation of the
dynamical system $\dot x (t) = -\, [a^\dagger \, \nabla V] (x(t)) +
c(x(t))$. For the equilibrium points of this ODE to be the critical
points of $V$, we require $V$ to be a Lyapounov functional.  This is
the case if $c \cdot \nabla V = 0$ on $\bb T^d$.

The generator of the diffusion $X_{t}^\epsilon$, denoted by $\mc
L_\epsilon$, is given by
\begin{equation*}
\mc{L}_{\epsilon}f \;=\;\epsilon\, e^{V/\epsilon} \, \nabla\,\cdot\,
\big\{ e^{-V/\epsilon} \,a\,  \nabla f \big\} \;+\; c\cdot \nabla f\;.
\end{equation*}

Let $\mu_\epsilon$ be the probability measure given by
\begin{equation}
\label{3-1}
\mu_\epsilon(dx) \;=\; \frac 1{Z_{\epsilon}}\, \exp\{-V(x)/\epsilon\}\, dx\;,
\end{equation}
where $Z_{\epsilon}$ is the normalizing constant,
$Z_{\epsilon}\;:=\;\int_{\bb T^d}\exp\{-V(x)/\epsilon\}\, dx$.  We
have seen in the previous section that $\mu_\epsilon$ is the
stationary state of the process $X_{t}^{\epsilon}$ provided $\nabla
\cdot (e^{-V/\epsilon} c) =0$. Since $c \cdot \nabla V$ vanishes, this
equation becomes $\nabla \cdot c=0$. We assume therefore that
\begin{equation}
\label{3-6}
c \cdot \nabla V \;=\; 0 \; \text{ and }\; 
\nabla \cdot c \;=\; 0 \; \text{ on }\; \bb T^d\;.
\end{equation}

We examine the transition time in the case where $V$ is a double well
potential. Assume that there exists an open set $\ms G\subset \bb T^d$
such that

\begin{itemize}
\item[(H1)] The potential $V$ has a finite number of critical points
  in $\ms G$.  Exactly two of them, denoted by $\bs{m}_{1}$ and
  $\bs{m}_{2}$, are local minima.  The Hessian of $V$ at each of these
  minima has $d$ strictly positive eigenvalues.

\item[(H2)] There is one and only one saddle point between
  $\bs{m}_{1}$ and $\bs{m}_{2}$ in $\ms G$, denoted by
  $\bs{\sigma}$. The Hessian of $V$ at $\bs \sigma$ has exactly one
  strictly negative eigenvalue and $(d-1)$ strictly positive
  eigenvalues.

\item[(H3)] We have that $V(\bs{\sigma}) < \inf_{x\in \partial\, \ms
    G} V(x)$.
\end{itemize}

Assume without loss of generality that $V(\bs{m}_{2}) \le
V(\bs{m}_{1})$, so that $\bs{m}_{2}$ is the global minimum of the
potential $V$ in $\ms G$. Denote by $\Omega$ the level set of the
potential defined by saddle point, $\Omega = \{ x \in \ms G : V(x)<
V(\bs{\sigma})\}$. Let $\ms {V}_{1}$, $\ms {V}_{2}$ be two domains
with smooth boundary containing $\bs{m}_1$ and $\bs{m}_2$,
respectively, and contained in $\Omega$:
\begin{equation}
\label{4-3}
\bs m_i \;\in\;
\ms V_i \;\subset\; \{x \in \ms G :  V(x) < V(\bs{\sigma})- \kappa \}
\end{equation}
for some $\kappa>0$.

Denote by $\nabla^2 V (x)$ the Hessian of $V$ at $x$. By Lemma 10.1
of \cite{LS1}, both $[\nabla^2 V \, a](\bs\sigma)$ and $[\nabla^2
V \, a^\dagger](\bs\sigma)$ have a unique (and the same) negative
eigenvalue.  Denote by $-\mu$ this common negative eigenvalue.

Let $\bb P^\epsilon_x$, $x\in \bb T^d$, be the probability measure on
$C(\bb R_+, \bb T^d)$ induced by the Markov process $X ^{\epsilon}_t$
starting from $x$. Expectation with respect to $\bb P^\epsilon_x$ is
represented by $\bb E^\epsilon_x$.

\begin{theorem}[Eyring-Kramers formula]
\label{thmp2}
We have that
\begin{equation}
\label{3-5}
\bb{E}^\epsilon_{\bs{m}_{1}} \left[H_{\ms {V}_{2}}\right]\;=\;
\left[1+o_{\epsilon}(1)\right]\, \mf p
\, e^{\Lambda /\epsilon} \;, \quad \text{\rm where }\;
\mf p\, \;=\; \frac{2\pi}{\mu} \,
\frac {\sqrt{-\det\left[\nabla^2 V \, (\bs{\sigma})\right]}}
{\sqrt{\det\left[(\nabla^2 V) (\bs{m}_{1})\right]}}
\end{equation}
and $\Lambda = V(\bs{\sigma}) - V(\bs{m}_1)$.
\end{theorem}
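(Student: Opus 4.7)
The plan is to express $\bb E^\epsilon_{\bs m_1}[H_{\ms V_2}]$ as a ratio involving the capacity, then compute the asymptotics of the two factors by Laplace's method. First I would prove a replacement statement, namely
\begin{equation*}
\bb E^\epsilon_{\bs m_1}[H_{\ms V_2}]\;=\;(1+o_\epsilon(1))\, \bb E^\epsilon_{\nu}[H_{\ms V_2}]\;,
\end{equation*}
where $\nu=\nu_{\ms V_1,\ms V_2}$ is the harmonic measure on $\partial \ms V_1$ from Section \ref{sec02}. Heuristically, started at $\bs m_1$, the diffusion equilibrates in the well of $\bs m_1$ on an $O(1)$ time-scale, much faster than the metastable time $e^{\Lambda/\epsilon}$, so by the strong Markov property it reaches $\partial \ms V_1$ distributed essentially as $\nu$ before ever visiting $\ms V_2$. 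Applying Proposition \ref{p2-1} with $f\equiv 1$ then yields
\begin{equation*}
\bb E^\epsilon_{\nu}[H_{\ms V_2}]\;=\;\frac{1}{\Cap(\ms V_1,\ms V_2)}\, \int h^*_{\ms V_1,\ms V_2}\, d\mu_\epsilon\;,
\end{equation*}
reducing the proof to asymptotics of the numerator and of the capacity.

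For the numerator I would exploit the exponential concentration of $\mu_\epsilon$ at the minima of $V$. Since $h^*$ satisfies $\mc L^*_\epsilon h^*=0$ with boundary values $1$ on $\ms V_1$ and $0$ on $\ms V_2$, arguments in the spirit of \cite{BEGK01} show that $h^*$ is close to $\chi_{W_1}$, where $W_1$ is the basin of attraction of $\bs m_1$ under the deterministic flow $\dot x=-a^\dagger \nabla V + c$, with the transition region confined to a neighborhood of $\bs\sigma$ of exponentially small $\mu_\epsilon$-mass (here condition \eqref{3-6} ensures $V$ is a Lyapunov functional, so $W_1$ is well defined). Hence $\int h^*\, d\mu_\epsilon=(1+o(1))\,\mu_\epsilon(W_1)$, and a Laplace expansion around the non-degenerate minimum $\bs m_1$ yields
\begin{equation*}
\mu_\epsilon(W_1)\;=\;(1+o(1))\,\frac{(2\pi\epsilon)^{d/2}}{Z_\epsilon\sqrt{\det\nabla^2 V(\bs m_1)}}\, e^{-V(\bs m_1)/\epsilon}\;.
\end{equation*}

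The main work is the capacity asymptotics, which I would obtain by matching upper and lower bounds from Theorems \ref{th2-1} and \ref{th2-2}. Introduce coordinates near $\bs\sigma$ and identify, via Lemma 10.1 of \cite{LS1}, the unique unstable direction associated with the common negative eigenvalue $-\mu$ of $[\nabla^2 V\, a](\bs\sigma)$ and $[\nabla^2 V\, a^\dagger](\bs\sigma)$. For the Dirichlet upper bound the test function $f_\epsilon\in \mc C_{1,0}$ depends essentially only on the unstable coordinate, interpolating between $1$ and $0$ through a one-dimensional barrier problem scaled by $\sqrt{\epsilon}$, and the test flow $\varphi_\epsilon\in \mc F_0$ is built so as to cancel the leading contribution of $f\, c$ inside $\Phi_f=a^\dagger\nabla f - f\, c$ while satisfying the constraints \eqref{e:fc}. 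Gaussian integration in the $(d-1)$ stable coordinates combined with the explicit one-dimensional optimum yields a sharp upper bound. The matching Thomson lower bound comes from an $\mc F_1$-flow concentrated in a narrow tube along the instanton from $\bs m_1$ to $\bs m_2$ through $\bs\sigma$, together with a vanishing test function. Dividing the Laplace asymptotics of $\mu_\epsilon(W_1)$ by the capacity produces exactly the prefactor $\mf p$ in \eqref{3-5}, with $c$ contributing only at subleading order thanks to $c\cdot\nabla V=0$.

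The principal obstacle is precisely this capacity estimate in the non-reversible setting: the test function alone does not attain the optimum (as noted in the reversible remark following Theorem \ref{th2-2}), so one must simultaneously construct compatible test flows for both variational principles, verify their divergence and flux conditions globally, and carry out the spectral analysis at the saddle to extract $\mu$ and the correct Gaussian factor. Secondary technical points include justifying the approximation $h^*\approx \chi_{W_1}$ for the non-self-adjoint operator $\mc L^*_\epsilon$, and controlling the in-well equilibration that underlies the replacement step.
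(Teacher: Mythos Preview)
The paper does not actually prove Theorem~\ref{thmp2}: immediately after the statement it defers the proof to \cite{lms17} for the special case $c=0$ and $a(x)\equiv a$ constant, and explicitly says that for the general case ``the proof has not been written.'' So there is no in-text argument to compare against.

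That said, your outline is precisely the potential-theoretic strategy the paper has been setting up and that \cite{lms17} implements: use Proposition~\ref{p2-1} with $f\equiv 1$ to write the mean hitting time from the harmonic measure as $\int h^*\,d\mu_\epsilon/\Cap_\epsilon(\ms V_1,\ms V_2)$, estimate the numerator by Laplace at $\bs m_1$, and pin down the capacity by matching the Dirichlet upper bound (Theorem~\ref{th2-1}) and the Thomson lower bound (Theorem~\ref{th2-2}) with test pairs $(f_\epsilon,\varphi_\epsilon)$ built from the linearized problem at $\bs\sigma$. You have correctly located the two genuinely nontrivial points: (i) the replacement of the Dirac start at $\bs m_1$ by the harmonic measure $\nu_{\ms V_1,\ms V_2}$, which is not proved anywhere in the paper and requires an in-well mixing estimate; and (ii) in the non-reversible case the optimal flow in Theorems~\ref{th2-1}--\ref{th2-2} is not zero, so one must construct $\varphi_\epsilon$ satisfying the divergence and flux constraints \eqref{e:fc} simultaneously with $f_\epsilon$, and extract the eigenvalue $-\mu$ of $[\nabla^2 V\,a](\bs\sigma)$ from the quadratic form near the saddle. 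Your proposal is therefore an accurate sketch of the intended proof; the only caveat is that, as the paper itself concedes, turning this sketch into a complete argument for variable $a$ and nonzero $c$ is still open.
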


The term $\mf p$ is called the prefactor. It can be understood as the
first-order term in the expansion in $\epsilon$ of the exponential
barrier. Let $\bb{E}^\epsilon_{\bs{m}_{1}} [H_{\ms {V}_{2}}] = \exp\{
\Lambda(\epsilon) / \epsilon\}$. Theorem \ref{thmp2} states that
$\Lambda(\epsilon) = \Lambda + \epsilon \log \mf p + o(\epsilon)$.

The proof of this theorem in the case $c=0$ and $a$ independent of
$x$, $a(x)=a$, can be found in \cite{lms17}. Uniqueness of local
minima and of saddle points connecting the wells is not required
there. The same argument should apply to the general case under the
hypotheses \eqref{19}, \eqref{3-6}, but the proof has not been
written.

The $0$-th order term in the expansion, $\Lambda$, can be obtained
from Freidlin and Wentzell large deviations theory of random
perturbations of dynamical systems \cite{fw98}. The pre-factor $\mf p$
has been calculated rigorously for reversible diffusions by Sugiura
\cite{s95, s01} [based on asymptotics of the principal eigenvalue and
eigenfunction for a Dirichlet boundary value problem in a bounded
domain], and independently, by Bovier, Eckhoff, Gayrard and Klein
\cite{BEGK1} [based on potential theory]. We refer to \cite{b13} for a
recent review.

In the context of chemical reactions, the transition time
$\bb{E}^\epsilon_{\bs{m}_{1}} [H_{\ms {V}_{2}}]$ corresponds to the
inverse of the rate of a reaction. The so-called ``Arrenhuis law''
relates the rate of a reaction to the absolute temperature. It seems
to have been first discovered empirically by Hood \cite{h78}.  van't
Hoff \cite{hoff96} proposed a thermodynamical derivation of the law,
and Arrhenius \cite{a89} physical arguments based on molecular
dynamics. In the self-adjoint case, the pre-factor $\mf p$ first
appeared in Eyring \cite{e35} and in more explicit form in Kramers
\cite{k40}. Bouchet and Reygnier \cite{BR} derived the formula in the
non-reversible situation.


\section{Metastability}
\label{sec03}

We developed in these last years a robust method to prove the
metastable behavior of Markov processes based on potential theory.  We
report in this section recent developments which rely on asymptotic
properties of elliptic operators.

We first define metastability. Let $Z_\epsilon (t)$ be a sequence of
Markov processes taking values in some space $E_\epsilon$. Let $\{\ms
E^1_\epsilon, \dots, \ms E^n_\epsilon, \Delta_\epsilon\}$ be a
partition of the set $E_\epsilon$, and set $\ms E_\epsilon = \ms
E^1_\epsilon \cup \cdots \cup \ms E^n_\epsilon$.

Fix a sequence of positive numbers $\theta_\epsilon$, and denote by
$\widehat Z_\epsilon(t)$ the process $Z_\epsilon(t)$ speeded-up by
$\theta_\epsilon$: $\widehat Z_\epsilon(t) = Z_\epsilon(t\,
\theta_\epsilon)$. Denote by $\widehat {\bs P}_{\epsilon,x}$, $x\in
E_\epsilon$, the distribution of the process $\widehat Z_\epsilon(t)$
starting from $x$.  Let $S =\{1, \dots, n\}$, $S_0 =\{ 0 \} \cup S$,
and let $\Upsilon_\epsilon : E_\epsilon \to S_0$ be the projection
given by
\begin{equation}
\label{5-6}
\Upsilon_\epsilon (x) \;=\; \sum_{j=1}^{n} j \, \chi_{\ms E^j_\epsilon} (x) \;.
\end{equation}
Note that points in $\Delta_\epsilon$ are mapped to $0$. Denote by
$z_\epsilon(t)$ the $S_0$-valued process defined by
\begin{equation*}
z_\epsilon(t) \;=\; \Upsilon_\epsilon (\widehat Z_\epsilon(t)) 
\;=\; \Upsilon_\epsilon (Z_\epsilon(t \theta_\epsilon)) \;.
\end{equation*}
The process $z_\epsilon(t)$ is usually not Markovian.

\begin{definition}
  \label{d1} {\rm [Metastability]}. We say that the process
  $Z_\epsilon (t)$ is metastable in the time scale $\theta_\epsilon$,
  with metastable sets $\ms E^1_\epsilon, \dots, \ms E^n_\epsilon$ if
  there exists a $S$-valued, continuous-time Markov chain $z (t)$ such
  that for all $x\in \ms E_\epsilon$ the $\widehat {\bs
    P}_{\epsilon, x}$-finite-dimensional distributions of $
  z_\epsilon(t)$ converge to the finite-dimensional distributions of
  $z (t)$.
\end{definition}

The Markov chain $z (t)$ is called the {\color{blue} reduced chain}.
Mind that the reduced chain does not take the value $0$. The sojourns
of $\widehat Z_\epsilon(t)$ at $\Delta_\epsilon$ are washed-out in the
limit. Of course, the same process $Z_\epsilon(t)$ may exhibit
different metastable behaviors in different time-scales or even
different metastable behaviors in the same time-scale but in different
regions of the space, inaccessible one to the other in that
time-scale.

In some examples \cite{jlt1, jlt2, bcl} the set $S$ may be countably
infinite. In these cases $\Upsilon_\epsilon$ is a projection from
$E_\epsilon$ to a finite set $S_\epsilon \cup \{0\}$, where
$S_\epsilon$ increases to a countable set $S$, and we require $\#
E_\epsilon/\# S_\epsilon \to 0$.  \smallskip

In the remaining part of this section we prove that under certain
hypotheses the diffusion $X^\epsilon_t$ is metastable. Some of these
conditions are not needed, but they simplify the presentation. The
reader will find in the references finer results.

We assume from now on that the potential $V$ fulfills the following
set of assumptions. There exists an open set $\ms G$ of $\bb T^d$ such
that

\begin{itemize}
\item[(H1')] The function $V$ has a finite number of critical points
  in $\ms G$.  The global minima of $V$ are represented by
  $m_{1}, \dots, m_n$. They all belong to $\ms G$ and they
  are all at the same height: $V(m_i)=V(m_j)$ for all $i$,
  $j$. The Hessian of $V$ at each of these minima has $d$ strictly
  positive eigenvalues.

\item[(H2')] Denote by $\{\sigma_1, \dots, \sigma_\ell\}$ the
  set of saddle points between the global minima. Assume that all
  saddle points are at the same height and that the Hessian of $V$ at
  these points has exactly one strictly negative eigenvalue and
  $(d-1)$ strictly positive eigenvalues.

\item[(H3')] We have that $V(\sigma_1) < \inf_{x\in \partial\, \ms
    G} V(x)$.  
\end{itemize}

Denote by $\Omega$ the level set of the potential defined by the
height of the saddle points: $\Omega = \{ x \in \ms G : V(x) <
V(\sigma_1)\}$. Let $\ms W_1, \dots, \ms W_p$ be the connected
components of $\Omega$. Assume that each of these sets contains one
and only one global minima, so that $p=n$. Denote by $\ms V_1, \dots,
\ms V_n$ domains with smooth boundaries satisfying \eqref{4-3} for
$1\le i\le n$, and let
\begin{equation}
\label{65}
\ms V \;=\; \bigcup_{j=1}^{n} \ms V_j \;, \quad \Delta \;=\; \bb T^d
\,\setminus\, \ms V\;, \quad \breve{\ms V}_j \;=\; \bigcup_{k:k\not =
  j} \ms V_k\;. 
\end{equation}

Recall from \eqref{3-5} the definition of $\Lambda$.  Denote by
$\widehat X^\epsilon_t$ the process $X^\epsilon_t$ speeded-up by
{\color{blue} $\theta_\epsilon = e^{\Lambda/\epsilon}$}. This is the
diffusion on $\bb T^d$ whose generator, denoted by $\widehat {\mc
  L}_\epsilon$, is given by $\color{blue}\widehat {\mc L}_\epsilon =
\theta_\epsilon\, {\mc L}_\epsilon$.  Denote by $\color{blue} \bb
P^\epsilon_x$, resp. $\color{blue} \widehat {\bb P}^\epsilon_x$, $x\in
\bb T^d$, the probability measure on $C(\bb R_+, \bb T^d)$ induced by
the diffusion $X^\epsilon_t$, resp. $\widehat X^\epsilon_t$, starting
from $x$. Expectation with respect to ${\bb P}^\epsilon_x$,
is represented by ${\bb E}^\epsilon_x$.

Let $\color{blue} S =\{ 1, \dots, n\}$, $\color{blue} S_0 = \{0\} \cup
S$. Denote by $\Upsilon: \bb T^d \to S_0$ the projection given by
\eqref{5-6} with $\ms E^j_\epsilon$ replaced by $\ms V_j$, and let
$x_\epsilon(t)$ be the $S_0$-valued process defined by
\begin{equation*}
x_\epsilon(t) \;=\; \Upsilon (\widehat X^\epsilon_t) 
\;=\; \Upsilon (X^\epsilon(t \theta_\epsilon)) \;.
\end{equation*}
Note that $x_\epsilon(t)$ is not Markovian.

The proof of the metastable behavior of the diffusion $X^\epsilon_t$
is divided in four steps. We first show that in the time scale
$\theta_\epsilon$ the process $X^\epsilon_t$ spends a negligible
amount of time in the set $\Delta$. Then, we derive a candidate for
the $S$-valued Markov chain which is supposed to describe the
asymptotic behavior of the process among the wells.  In the third
step, we prove that the projection of the trace of
$\widehat{X}^\epsilon_t$ on $\ms V$ converges to the $S$-valued Markov
chain introduced in the second step. Finally, we show that the
previous results together with an extra condition yield the
convergence of the finite-dimensional distributions of
$x_\epsilon(t)$.

\smallskip
\noindent{\bf Step 1: The set $\Delta$ is negligible.} We first examine
in the next lemma the time spent on the set $\Delta$.

\begin{lemma}
\label{l5-1}
For all $t>0$,
\begin{equation}
\label{5-1}
\lim_{\epsilon \to 0} \, \sup_{x\in \ms V} \, \bb E^\epsilon_{x} \Big[  
\int_{0}^{t} \chi_{\Delta}(X(s \theta_\epsilon))\, ds \, \Big] \;=\; 0 \;.
\end{equation}
\end{lemma}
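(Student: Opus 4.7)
The strategy is to combine the stationarity of $\mu_\epsilon$ with the harmonic-measure representation from Proposition \ref{p2-1} in order to show that, in the diffusive time scale $\theta_\epsilon$, each inter-well excursion contributes only an exponentially small amount of time in $\Delta$.

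First I would establish the starting-point-free version of \eqref{5-1}, namely
\begin{equation*}
\bb E_{\mu_\epsilon}\Big[\int_0^t \chi_\Delta(X(s\theta_\epsilon))\,ds\Big]
\;=\; t\,\mu_\epsilon(\Delta)\;,
\end{equation*}
which follows from the invariance of $\mu_\epsilon$ under the Markov semigroup generated by $\mc L_\epsilon$. By Laplace's method applied to $\mu_\epsilon(dx) \propto e^{-V(x)/\epsilon} dx$, since every global minimum of $V$ lies in the open set $\ms V$ and $\ms V_j \subset \{V < V(\bs\sigma) - \kappa\}$, one obtains $\mu_\epsilon(\Delta) \le C e^{-\delta/\epsilon}$ for some $\delta > 0$ (the constant $\delta$ is roughly $\min(\inf_\Delta V - V(m_1), \kappa)$).

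The second step is to transfer this stationary bound to a pointwise bound, which is the heart of the argument. Fix $i \in S$ and apply Proposition~\ref{p2-1} with $A = \ms V_i$, $B = \breve{\ms V}_i$, and $f = \chi_\Delta$, to get
\begin{equation*}
\bb E_{\nu_{A,B}}\Big[\int_0^{H_B} \chi_\Delta(X_s)\,ds\Big]
\;=\; \frac{1}{\Cap(A,B)} \int h^*\,\chi_\Delta \, d\mu_\epsilon \;\le\; \frac{\mu_\epsilon(\Delta)}{\Cap(A,B)}\;.
\end{equation*}
The Eyring--Kramers-type capacity estimate (analogous to Theorem~\ref{thmp2}) yields $\Cap(A,B) \asymp \theta_\epsilon^{-1}$, so the left-hand side is bounded by $C\,\theta_\epsilon\,\mu_\epsilon(\Delta) = O(e^{(\Lambda-\delta)/\epsilon})$. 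Provided the neighborhoods $\ms V_j$ are chosen so that $\delta > \Lambda$ (which can always be arranged by shrinking the $\ms V_j$, since $\inf_\Delta V$ is then close to $V(\bs\sigma)$ and $\Lambda = V(\bs\sigma) - V(m_1)$; alternatively one chooses $\kappa$ appropriately), the bound tends to zero exponentially.

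To pass from the harmonic measure $\nu_{A,B}$ on $\partial \ms V_i$ to an arbitrary starting point $x \in \ms V$, I would use the strong Markov property at a microscopic intermediate time $\tau_\epsilon$ satisfying $1 \ll \tau_\epsilon \ll \theta_\epsilon$: within each well $\ms V_i$ the process equilibrates to the quasi-stationary distribution in time $O(1)$ uniformly in $\epsilon$ (the drift $-a^\dagger \nabla V + c$ pushes toward $m_i$ on the bounded domain $\ms V_i$), so that for $s \ge \tau_\epsilon$ the distribution of the first exit point from $\ms V_i$ is comparable, up to a multiplicative constant, to $\nu_{A,B}$. Decomposing the window $[0, t\theta_\epsilon]$ into at most $O(t)$ successive inter-well excursions via strong Markov and summing the contributions yields
\begin{equation*}
\sup_{x \in \ms V}\,\bb E^\epsilon_x\Big[\int_0^{t\theta_\epsilon}\chi_\Delta(X_s)\,ds\Big] \;\le\; \tau_\epsilon \;+\; C\,t\,\theta_\epsilon\,\mu_\epsilon(\Delta)\;\longrightarrow\;0\;.
\end{equation*}
The main obstacle is the last step: establishing that the exit distribution from $\ms V_i$, when started at an arbitrary $x \in \ms V_i$, is comparable to $\nu_{A,B}$ uniformly in $\epsilon$. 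This requires Harnack-type or quasi-stationary estimates for the non-reversible elliptic operator $\mc L_\epsilon$ inside the well, which are standard but technically delicate in the small-noise regime.
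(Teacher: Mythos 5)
Your overall route is the same as the paper's (which itself only sketches this lemma): apply Proposition \ref{p2-1} with $f=\chi_\Delta$, $A=\ms V_j$, $B=\breve{\ms V}_j$, bound $\int h^*\chi_\Delta\,d\mu_\epsilon$ by $\mu_\epsilon(\Delta)$, use that $\theta_\epsilon\,\Cap_\epsilon(\ms V_j,\breve{\ms V}_j)$ converges to a positive constant, and accept the transfer from the harmonic measure $\nu$ to an arbitrary starting point $x\in\ms V_j$ as the remaining technical step (the paper waves at this exactly as you do, invoking that $x$ lies in the basin of attraction of $m_j$).

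There is, however, a concrete error in your bookkeeping. You require $\delta>\Lambda$ so that $\theta_\epsilon\,\mu_\epsilon(\Delta)\to 0$, and claim this can be arranged by shrinking the $\ms V_j$. It cannot: $\partial\ms V_j\subset\Delta$ and $V\le V(\bs\sigma)-\kappa$ on $\partial\ms V_j$ by \eqref{4-3}, so $\inf_\Delta V\le V(\bs\sigma)-\kappa$ and hence $\delta\le\Lambda-\kappa<\Lambda$ for any admissible choice of the wells (shrinking them only lowers $\inf_\Delta V$ further). Consequently $\theta_\epsilon\,\mu_\epsilon(\Delta)$ diverges and your final display, as written, does not tend to zero. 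The fix is that you dropped the time normalization: the quantity in \eqref{5-1} is $\theta_\epsilon^{-1}\,\bb E^\epsilon_x[\int_0^{t\theta_\epsilon}\chi_\Delta(X_s)\,ds]$, not $\bb E^\epsilon_x[\int_0^{t\theta_\epsilon}\chi_\Delta(X_s)\,ds]$. After dividing your bound by $\theta_\epsilon$ you get $\tau_\epsilon/\theta_\epsilon+C\,t\,\mu_\epsilon(\Delta)$, which vanishes using only $\mu_\epsilon(\Delta)\to 0$ and the positivity of $\lim\theta_\epsilon\,\Cap_\epsilon(\ms V_j,\breve{\ms V}_j)$ --- no exponential rate for $\mu_\epsilon(\Delta)$ and no condition on $\delta$ versus $\Lambda$ are needed. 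This is precisely how the paper closes the argument.
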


\begin{proof}
Here is a sketch of the proof of this result which highlights the
relevance of the variational formulae for the capacity. Denote by
$\color{blue} \Cap_\epsilon(\ms A, \ms B)$ the capacity between two
disjoint subsets $\ms A$, $\ms B$ with respect to the diffusion
$X^\epsilon_t$.  

Fix $1\le j\le n$ and assume that $x$ belongs to $\ms
V_j$. The time scale $\theta_\epsilon$ is of the order of the
transition time $H_{\breve{\ms V}_j}$, where the $\breve{\ms V}_j$ has
been introduced in \eqref{65}. The expectation appearing in the
statement of the lemma is therefore of the same order of
\begin{equation*}
\frac 1{\theta_\epsilon} \, \bb E^\epsilon_{x} \Big[  
\int_{0}^{H_{\breve{\ms V}_j}} 
\chi_{\Delta}(X(s))\, ds \, \Big] \;\sim\;
\frac 1{\theta_\epsilon \Cap_\epsilon(\ms V_j, \breve{\ms V}_j)} 
\int_{\bb T^d} \chi_\Delta\, h_{\ms V_j, \breve{\ms V}_j} 
\, d\mu_\epsilon\;,
\end{equation*}
where last step follows from Proposition \ref{p2-1}. It would be an
identity if we had the harmonic measure in place of the Dirac measure
concentrated on $x$, but these expectations should not be very
different because $x$ belongs to the basin of attraction of
$m_j$. Since $\mu_\epsilon(\Delta) \to 0$, the proof is completed if
we can show, using the variational principles, that $\theta_\epsilon
\Cap_\epsilon(\ms V_j, \breve{\ms V}_j)$ converges to a positive value.
\end{proof}

\noindent{\bf Step 2: The reduced chain.} The time-scale
$\theta_\epsilon$ at which the process $X^\epsilon_t$ evolves among
the wells should be of the order of the transition time $\bb
E^\epsilon_{m_j} [\, H(\breve{\ms V}_j) \,]$. Hence, by Proposition
\ref{p2-1}, 
\begin{equation*}
\theta_\epsilon \;\sim\; 
\bb E^\epsilon_{m_j} \big [\, H(\breve{\ms V}_j) \,\big] \;\sim\; 
\frac{1}{\Cap_\epsilon(\ms V_j \,,\, \breve{\ms V}_j)}\int 
h_{\ms V_j, \breve{\ms V}_j^*} \,d\mu\;.
\end{equation*}
Since $m_j$ is a global minimum of $V$, the last integral is of order
$1$ because the harmonic function $h_{\ms V_j, \breve{\ms V}_j^*}$ is
equal to $1$ at $\ms V_j$. We conclude that the time-scale
$\theta_\epsilon$ should be of the order $\Cap_\epsilon(\ms V_j \,,\,
\breve{\ms V}_j)^{-1}$.

It is proved in \cite{BL1, BL7}, in the context of Markov chains
taking values in a countable state space, that under certain
assumptions
\begin{equation*}
\lambda_j \;:=\; \lim_{\epsilon \to 0}
\theta_\epsilon \, \frac 1{\mu_\epsilon (\ms V_j)}
\, \Cap_\epsilon(\ms V_j \,,\, \breve{\ms V}_j) 
\end{equation*}
represents the holding time at $j$ of the reduced chain. Moreover, in
the reversible case, the jump rates $r(j,k)$ of the reduced chain are
given by 
\begin{equation*}
r(j,k) \;=\; \lim_{\epsilon \to 0} \frac 1{2\, \mu_\epsilon (\ms V_j)}
\Big\{ \Cap_\epsilon (\ms V_j \,,\, \breve{\ms V}_j) 
+ \Cap_\epsilon(\ms V_k \,,\, \breve{\ms V}_k) 
- \Cap_\epsilon (\ms V_j \cup \ms V_k \,,\, 
\ms V \setminus [\ms V_j \cup \ms V_k] )\, \Big\} \,. 
\end{equation*}
 
In the non-reversible case, the jump rates are more difficult to
derive. By \cite[Proposition 4.2]{BL7}, still in the context of Markov
chains taking values in a countable state space,
\begin{equation*}
r(j,k) \;=\; \lambda_j \, \lim_{\epsilon \to 0} \overline{\bb
P}^\epsilon_{m_j} \big[ \, H(\ms V_k) <  H\big(\ms V \setminus [\ms V_j
\cup \ms V_k] \big)\, \big]\;,
\end{equation*}
where $\overline{\bb P}^\epsilon_{m_j}$ represents the distribution of
the process in which the well $\ms V_j$ has been collapsed to the
point $m_j$. Estimates on the harmonic function appearing on the
right-hand of this equation are obtained by showing that this function
solves a variational problem, similar to the one for the capacity, and
then that to be optimal, a function has to take a precise value at the
set $\ms V_j$. We refer to \cite{l2014, LS1} for details, where this
program has been successfully undertaken for two different models.

Assume that one can compute the asymptotic jump rates through the
previous formulae or that one can guess by other means the jump rates
of the reduced chain.  Denote by $\bs L$ the generator of the
$S$-valued continuous-time Markov chain induced by these jump rates.
Let $\color{blue} D(\bb R_+, E)$, $E$ a metric space, be the space of
$E$-valued, right-continuous functions with left-limits endowed with
the Skorohod topology, and let $\color{blue} \bs Q_j$, $j\in S$, the
measure on $D(\bb R_+, S)$ induced by the Markov chain with generator
$\bs L$ starting from $j$.

\smallskip
\noindent{\bf Step 3: Convergence of the trace.}
We turn to the convergence of the trace process.  Recall that
$\widehat X^\epsilon_t$ represents the process $X^\epsilon_t$
speeded-up by $\theta_\epsilon$.  Denote by $T_{\ms V} (t)$, $t\ge 0$,
the total time spent by the diffusion $\widehat X^\epsilon$ on the set
$\ms V$ in the time interval $[0,t]$:
\begin{equation*}
T_{\ms V} (t)  \,:=\, \int_0^t \chi_{\ms V} (\widehat X^\epsilon_s) \,ds\;,
\end{equation*}
Denote by $\{S_{\ms V} (t) : t\ge 0\}$ the generalized inverse of
$T_{\ms V} (t)$:
\begin{equation*}
S_{\ms V} (t) \,:=\, \sup\{s\ge 0 : T_{\ms V} (s) \le t \}\,.
\end{equation*}
Clearly, for all $r\ge 0$, $t\ge 0$,
\begin{equation} 
\label{60}
\{ S_{\ms V} (r) \ge t\} \;=\; \{ T_{\ms V} (t) \le r\}\;.
\end{equation}
It is also clear that for any starting point $x\in \bb T^d$,
$\lim_{t\to \infty} T_{\ms V} (t) = \infty$ almost surely. Therefore,
the random path $\{Y_\epsilon (t) : t\ge 0\}$, given by $\color{blue}
Y_\epsilon(t) := \widehat X^\epsilon (S_{\ms V} (t))$, is well defined
for all $t\ge 0$ and takes value in the set $\ms V$.  We call the
process $Y_\epsilon(t)$ the {\color{blue}trace} of $\widehat
X^\epsilon_t$ on the set $\ms V$.

The process $Y_\epsilon(t)$ is Markovian provided the initial
filtration is large enough.  Indeed, denote by $\{\ms F^0_t : t\ge
0\}$ the natural filtration of $C(\bb R_+, \bb T^d)$: $\ms F^0_t =
\sigma(X_s: 0\le s\le t)$.  Fix $x_0\in \ms V$ and denote by $\{\ms
F_t : t\ge 0\}$ the usual augmentation of $\{\ms F^0_t : t\ge 0\}$
with respect to $\bb P^\epsilon_{x_0}$. We refer to Section III.9 of
\cite{rw94} for a precise definition, and to \cite{ls2017} for a proof of
the next result which relies on the identity \eqref{60}.

\begin{lemma}
\label{as15}
For each $t\ge 0$, $S_{\ms V} (t)$ is a stopping time with respect to
the filtration $\{\ms F_t\}$. 
\end{lemma}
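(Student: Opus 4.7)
The plan is to show $\{S_{\ms V}(t) \le s\} \in \ms F_s$ for every $s\ge 0$ by exploiting the duality identity \eqref{60} together with the elementary observation that $T_{\ms V}(s)$ is itself adapted. I would first note that because each $\ms V_j$ is a domain (open set) with smooth boundary, the set $\ms V$ is Borel, so the map $u \mapsto \chi_{\ms V}(\widehat X^\epsilon_u)$ is a bounded, progressively measurable process under the natural filtration of $\widehat X^\epsilon$. Consequently, for every fixed $s\ge 0$, the random variable
\begin{equation*}
T_{\ms V}(s) \;=\; \int_0^s \chi_{\ms V}(\widehat X^\epsilon_u)\, du
\end{equation*}
is $\ms F^0_s$-measurable, and a fortiori $\ms F_s$-measurable, the filtration $\{\ms F_t\}$ being the usual augmentation.

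Next, I would rewrite the level sets of $S_{\ms V}(t)$ using \eqref{60}. Taking complements in that identity gives
\begin{equation*}
\{S_{\ms V}(r) < t\} \;=\; \{T_{\ms V}(t) > r\}\;,
\end{equation*}
or, swapping the roles of $r$ and $t$, $\{S_{\ms V}(t) < s\} = \{T_{\ms V}(s) > t\}$. By the preceding paragraph the right-hand side belongs to $\ms F_s$, so $S_{\ms V}(t)$ is an optional time with respect to $\{\ms F_s\}$.

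To upgrade optional to stopping time, I would invoke the right-continuity of the usual augmentation (see e.g. Section III.9 of \cite{rw94}): one has $\ms F_{s+} = \ms F_s$, hence
\begin{equation*}
\{S_{\ms V}(t) \le s\} \;=\; \bigcap_{n\ge 1} \{S_{\ms V}(t) < s + 1/n\}
\;=\; \bigcap_{n\ge 1} \{T_{\ms V}(s+1/n) > t\}
\;\in\; \bigcap_{n\ge 1} \ms F_{s+1/n} \;=\; \ms F_s \;.
\end{equation*}
This yields the stopping-time property. The only mildly delicate point is that one must work with the usual augmentation rather than the raw natural filtration, since the intersection above is over arbitrarily small strictly positive $1/n$; everything else is essentially bookkeeping once \eqref{60} is available.
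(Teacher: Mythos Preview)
Your argument is correct and matches the approach the paper indicates: the paper does not spell out a proof here but refers to \cite{ls2017}, noting that the result ``relies on the identity \eqref{60}'', which is precisely the duality you exploit together with the right-continuity of the usual augmentation from \cite{rw94}. There is nothing to add.
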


As $S_{\ms V} (t)$ is a stopping time with respect to the filtration
$\{\ms F_t\}$, $Y_\epsilon(t)$ is a $\ms V$-valued, Markov process
with respect to the filtration $\color{blue} \ms G_t := \ms F_{S(t)}$.
Let $\Psi: \ms V \to S$ be the projection given by
\begin{equation*}
\Psi(x) \;=\; \sum_{j=1}^{n} j \, \chi_{\ms V_j} (x) \;,
\end{equation*}
and denote by $y_\epsilon(t)$ the $S$-valued process obtained by
projecting $Y_\epsilon(t)$ with $\Psi$:
\begin{equation*}
y_\epsilon(t) \;=\; \Psi(Y_\epsilon(t))\;.
\end{equation*}
Note that the process $y_\epsilon(t)$ is not Markovian.

Denote by $\bb Q^\epsilon_x$, resp. $\mb Q^\epsilon_x$, $x\in \ms V$,
the probability measure on $D(\bb R_+, \ms V)$, resp. $D(\bb R_+, S)$,
induced by the process $Y_\epsilon(t)$, resp. $y_\epsilon(t)$, given
that $Y_\epsilon(0) = x$.  Fix $j\in S$, $x\in \ms V_j$. As usual, the
proof that $\mb Q^\epsilon_x$ converges to $\mb Q_j$ is divided in two
steps. We first show that the sequence $\mb Q^\epsilon_x$ is tight and
then we prove the uniqueness of limit points.

\begin{lemma}
\label{l3-1}
Assume that conditions \eqref{5-1} is in force. Suppose, furthermore,
that
\begin{equation}
\label{3-4}
\lim_{r \to 0} \, \limsup_{\epsilon \to 0}\,
\max_{1\le j\le n} \, \sup_{x\in\ms V_j} \, \bb P^\epsilon_{x}
\big[\, H(\breve{\ms V}_j) \,\le\, r\, \theta_\epsilon \,\big]  \;=\; 0\;. 
\end{equation}
Then, for every $1\le j\le n$, $x_0\in \ms V_j$, the sequence of
measures $\mb Q^\epsilon_{x_0}$ is tight. Moreover, every limit point
$\mb Q^*$ of the sequence $\mb Q^\epsilon_{x_0}$ is such that
\begin{equation}
\label{3-3}
\mb Q^* \{ x : x(0) = j\} \;=\; 1 \quad \text{and} \quad 
\mb Q^* \{ x : x(t) \not = x(t-)\}\;=\; 0
\end{equation}
for every $t>0$.
\end{lemma}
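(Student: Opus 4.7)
The plan is to verify Aldous's tightness criterion in $D(\bb R_+,S)$ and then derive both properties in \eqref{3-3} from the resulting uniform modulus estimate. Since $S$ is finite, tightness of one-dimensional marginals is automatic, so it suffices to prove, for every $T>0$,
\begin{equation*}
\lim_{r\to 0}\limsup_{\epsilon\to 0}\sup_{\substack{\tau\le T\\\theta\le r}}\mb Q^\epsilon_{x_0}\bigl[y_\epsilon(\tau+\theta)\neq y_\epsilon(\tau)\bigr]\;=\;0,
\end{equation*}
the supremum taken over $\{\ms G_t\}$-stopping times $\tau\le T$. The strong Markov property of $Y_\epsilon$ (inherited from Lemma~\ref{as15}) together with the identity $y_\epsilon(t)=\Psi(Y_\epsilon(t))$ reduces the problem to bounding $\mb Q^\epsilon_x[y_\epsilon(\theta)\neq j]$ uniformly in $j$ and $x\in\ms V_j$ for $\theta\le r$.

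To control this latter quantity, let $\widehat H$ denote the hitting time of $\breve{\ms V}_j$ by $\widehat X^\epsilon$. Since $Y_\epsilon(\theta)=\widehat X^\epsilon(S_{\ms V}(\theta))$ and $\widehat X^\epsilon(0)=x\in\ms V_j$, the event $\{y_\epsilon(\theta)\neq j\}$ forces $\widehat H\le S_{\ms V}(\theta)$, whence the key inclusion
\begin{equation*}
\{y_\epsilon(\theta)\neq j\}\;\subseteq\;\{\widehat H\le r\}\;\cup\;\{S_{\ms V}(\theta)>r\}.
\end{equation*}
Rewriting \eqref{3-4} on the $\widehat X^\epsilon$ time scale gives $\widehat{\bb P}^\epsilon_x[\widehat H\le r]=\bb P^\epsilon_x[H(\breve{\ms V}_j)\le r\theta_\epsilon]$, which vanishes uniformly in $x\in\ms V_j$ as $\epsilon\to 0$ and then $r\to 0$. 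For the second event, identity \eqref{60} yields $\{S_{\ms V}(\theta)>r\}\subseteq\{T_{\ms V}(r)\le\theta\}$, and choosing $\theta\le r/2$ places this inside $\bigl\{\int_0^r\chi_\Delta(\widehat X^\epsilon_s)\,ds\ge r/2\bigr\}$. Markov's inequality combined with Lemma~\ref{l5-1}, transported to the $\widehat X^\epsilon$ time scale, bounds this probability by $(2/r)\,o_\epsilon(1)$ uniformly in $x\in\ms V$. This establishes Aldous's criterion and hence tightness of $\{\mb Q^\epsilon_{x_0}\}$.

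The two properties in \eqref{3-3} follow from the same estimate. Because Skorohod time-changes fix the origin, $x\mapsto x(0)$ is continuous on $D(\bb R_+,S)$, so $\{x:x(0)=j\}$ is closed and the Portmanteau theorem gives $\mb Q^*\{x(0)=j\}\ge\limsup_\epsilon\mb Q^\epsilon_{x_0}\{x(0)=j\}=1$, since $y_\epsilon(0)=\Psi(x_0)=j$ identically. For the no-fixed-discontinuity property, fix $t>0$ and apply the Aldous estimate at $\tau=t-\delta$, $\theta\le 2\delta$ to obtain
\begin{equation*}
\lim_{\delta\to 0}\limsup_{\epsilon\to 0}\,\mb Q^\epsilon_{x_0}\bigl[y_\epsilon(t-\delta)\neq y_\epsilon(t+\delta)\bigr]\;=\;0,
\end{equation*}
and invoke the standard identification (cf.~\cite{ls2017}) of the corresponding quantity in the weak limit with $\mb Q^*\{x(t)\neq x(t-)\}$.

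The main obstacle is the second event $\{S_{\ms V}(\theta)>r\}$: controlling it forces one to convert trace time back to real time uniformly in the starting point in $\ms V$, which is exactly the content of \eqref{5-1}. This is the reason both hypotheses \eqref{5-1} and \eqref{3-4} are needed in tandem. Once the decomposition into $\{\widehat H\le r\}\cup\{S_{\ms V}(\theta)>r\}$ is exploited, everything else is routine Aldous-Skorohod machinery.
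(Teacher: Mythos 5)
The paper itself gives no proof of this lemma (it defers to \cite[Lemma 7.5]{ls2017}), and your argument is precisely the standard one used there: Aldous's criterion with the decomposition of $\{y_\epsilon(\tau+\theta)\neq y_\epsilon(\tau)\}$ into a fast-escape event controlled by \eqref{3-4} and an excess-time-in-$\Delta$ event controlled via \eqref{60}, Markov's inequality and \eqref{5-1}. The inclusion $\{y_\epsilon(\theta)\neq j\}\subseteq\{\widehat H\le r\}\cup\{S_{\ms V}(\theta)>r\}$ and the subsequent estimates are correct, so the proposal is sound at the paper's level of rigor; the only two points that would need a line more care in a full write-up are the strong Markov property of the trace process at $\{\ms G_t\}$-stopping times, and, in the last step, choosing $\delta$ so that $t\pm\delta$ are a.s.\ continuity points of $\mb Q^*$ before passing $\mb Q^\epsilon_{x_0}[y_\epsilon(t-\delta)\neq y_\epsilon(t+\delta)]$ to the limit.
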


A proof of this result for one-dimensional diffusions is presented in
\cite[Lemma 7.5]{ls2017}.  Condition \eqref{3-4} asserts that in the
time-scale $\theta_\epsilon$, the process $X^\epsilon_t$ may not jump
instantaneously from one well to the other.  We show in Section 8 of
this article that the probability $\bb P^\epsilon_{x} [\, H(\breve{\ms
  V}_j) \,\le\, r\, \theta_\epsilon \,]$ is bounded by the capacity
between two sets for an enlarged process. The proof of this lemma is
thus reduced to an estimate of capacities.

The proof of uniqueness relies on the characterization of
continuous-time Markov chains as solutions of martingale problems.
One needs to show that
\begin{equation}
\label{5-4}
\bs F(y(t)) \;-\; \int_0^t (\bs L \bs F)(y(s))\, ds
\end{equation}
is a martingale under $\mb Q$ for all functions $\bs F: S \to \bb R$
and all limit point $\mb Q$ of the sequence $\mb Q^\epsilon_{x_0}$.

We proved in \cite{BL1, BL7} that this property is in force in the
context of countable state spaces provided the mean jump rates
converge and if each well $\ms V_j$ has an element $z_j$ such that
\begin{equation}
\label{5-2}
\lim_{\epsilon \to 0} \sup_{y\in \ms V_j , y\not = z_j} 
\frac{\Cap_\epsilon(\ms V_j, \breve{\ms V}_j)}
{\Cap_\epsilon(\{y\}, \{z_j\})} \;=\; 0\;.
\end{equation}
The point $z_j$ is not special. Typically, if \eqref{5-2} holds for a
point $z_j$ in the well, it holds for all the other ones. We refer to
\cite{BL1, BL7} for details.

Condition \eqref{5-2} has been derived for Markov processes which
``visit points'', that is, for Markov processes which visit all points
of a well before reaching another well. This is the case of condensing
zero-range processes \cite{BL3, l2014}, random walks in potential
fields \cite{lmt2015, LS1}, one-dimensional diffusions \cite{ls2017},
and for all processes whose wells are reduced to singletons, as the
inclusion processes \cite{bdg2016}.

We present here an alternative method to deduce \eqref{5-4} which
relies on certain asymptotic properties of the elliptic operator $\ms
L_\epsilon$.  Fix a function $\bs F \colon S \to \bb R$, let $\bs G =
\bs L \bs F$, and let $g\colon \bb T^d\to\bb R$ be given by
\begin{equation*}
g \;=\; \sum_{i=1}^{n} \bs G(i) \, \chi_{\ms V_i}\;.  
\end{equation*}
Assume that there exists a sequence of function $g_\epsilon : \bb T^d
\to \bb R$ such that
\begin{itemize}
\item[(P1)] $g_\epsilon$ vanishes on $\ms V^c$ and converges to $g$
  uniformly on $\ms V$;
\item[(P2)] The Poisson equation $\widehat{\mc L}_\epsilon f = g_\epsilon$
  in $\bb T^d$ has a solution denoted by $f_\epsilon$. Moreover, there
  exists a finite constant $C_0$ such that
\begin{equation*}
\sup_{0<\epsilon<1}\, \sup_{x\in\bb T^d} |f_\epsilon(x)| \;\le\; C_0\;,
\quad\text{and} \quad
\lim_{\epsilon\to 0} 
\sup_{x\in \ms V} \big|\, f_\epsilon(x) 
- f(x)\, \big| \;=\; 0\;,
\end{equation*}
where $f: \bb T^d \to \bb R$ is given by $f = \sum_{1\le i \le n} \bs
F(i) \, \chi_{\ms V_i}$.
\end{itemize}

The natural candidate for $g_\epsilon$ in conditions (P1) and (P2) is
the function $g$ itself. However, as the process is ergodic, the
Poisson equation $\widehat{\mc L}_\epsilon f = b$ has a solution only if
$b$ has mean zero with respect to $\mu_{\epsilon}$.  We need therefore
to modify $g$ to obtain a mean-zero function.  Denote by $\pi$ the
stationary state of the Markov chain whose generator is $\bs L$. We
expect $\mu_\epsilon(\ms V_i)$ to converge to $\pi_i$. Hence,
\begin{equation*}
\lim_{\epsilon\to 0} E_{\mu_{\epsilon}}[g] \;=\; 
\lim_{\epsilon\to 0} \sum_{i=1}^{n} \bs G(i) \,
\mu_\epsilon(\ms V_i) \; = \; \sum_{i=1}^{n} \bs L \bs F (i) \,
\pi_i \;=\; 0\;.
\end{equation*}
A reasonable candidate for $g_\epsilon$ is thus $g \,-\, r(\epsilon)
\, \chi_{\ms V_1}$, where $r(\epsilon) =
E_{\mu_\epsilon}[g]/\mu_\epsilon(\ms V_1)$.

Properties (P1), (P2) have been proved in \cite{et2016, st2017} for
elliptic operators on $\bb R^d$ of the form $\mc L_\epsilon f =
e^{V/\epsilon} \nabla \cdot (e^{-V/\epsilon} a \nabla f)$ and in
\cite{ls2017} for one-dimensional diffusions with periodic boundary
conditions. It is an open problem to prove these conditions in the
context of interacting particle systems.

\begin{lemma}
\label{l08}
Fix $1\le j\le n$ and $x_0\in \ms V_j$. Assume that conditions
(P1) and (P2) are in force. Let $\mb Q^*$ be a limit point of the
sequence $\mb Q^\epsilon_{x_0}$ satisfying \eqref{3-3}. Then, for
every $\bs F: S \to \bb R$, \eqref{5-4} is a martingale under the
measure $\mb Q^*$.
\end{lemma}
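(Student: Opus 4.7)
The plan is to transfer a martingale identity from the diffusion $\widehat{X}^\epsilon_t$ to the trace process $Y_\epsilon(t)$ via optional sampling at the stopping time $S_{\ms V}(t)$, and then pass to the limit $\epsilon\to 0$ using the uniform convergence supplied by (P1)--(P2) together with the no-jump property \eqref{3-3}.

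First, I would invoke (P2) and Dynkin's formula: since $\widehat{\mc L}_\epsilon f_\epsilon=g_\epsilon$ on $\bb T^d$,
$$M^\epsilon_t \;:=\; f_\epsilon(\widehat{X}^\epsilon_t) \,-\, f_\epsilon(\widehat{X}^\epsilon_0) \,-\, \int_0^t g_\epsilon(\widehat{X}^\epsilon_s)\,ds$$
is an $\{\ms F_t\}$-martingale under $\widehat{\bb P}^\epsilon_{x_0}$. By Lemma~\ref{as15}, $S_{\ms V}(t)$ is an $\{\ms F_t\}$-stopping time. Since $g_\epsilon\equiv 0$ on $\Delta$ by (P1), the change of variable $u=T_{\ms V}(s)$ yields
$$\int_0^{S_{\ms V}(t)} g_\epsilon(\widehat{X}^\epsilon_s)\,ds \;=\; \int_0^{S_{\ms V}(t)} g_\epsilon(\widehat{X}^\epsilon_s)\,\chi_{\ms V}(\widehat{X}^\epsilon_s)\,ds \;=\; \int_0^t g_\epsilon(Y_\epsilon(u))\,du,$$
while $f_\epsilon(\widehat{X}^\epsilon(S_{\ms V}(t)))=f_\epsilon(Y_\epsilon(t))$ by definition of the trace. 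Applying optional sampling to the truncated bounded martingales $M^\epsilon_{\cdot\wedge K}$ and letting $K\to\infty$, controlled by the $\omega$-uniform bound $|M^\epsilon_{S_{\ms V}(t)}|\le 2\|f_\epsilon\|_\infty + \|g_\epsilon\|_\infty\,t$, I obtain that
$$N^\epsilon_t \;:=\; f_\epsilon(Y_\epsilon(t)) \,-\, f_\epsilon(Y_\epsilon(0)) \,-\, \int_0^t g_\epsilon(Y_\epsilon(u))\,du$$
is a martingale with respect to $\ms G_t=\ms F_{S_{\ms V}(t)}$.

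Next, since $Y_\epsilon(s)\in\ms V$ for every $s$, (P1) and (P2) give the $\omega$-uniform convergence on $[0,t]$
$$\sup_{0\le s\le t}\Big|\,N^\epsilon_s \,-\, \bigl[\bs F(y_\epsilon(s)) - \bs F(y_\epsilon(0)) - \int_0^{s}(\bs L\bs F)(y_\epsilon(u))\,du\bigr]\,\Big| \;\xrightarrow[\epsilon\to 0]{}\; 0.$$
Fix $0\le s_1<\cdots<s_k\le s<t$ and a bounded continuous $H\colon S^k\to\bb R$. The martingale identity for $N^\epsilon$, combined with the uniform approximation above, yields
$$\lim_{\epsilon\to 0}\,\bb E^{\mb Q^\epsilon_{x_0}}\!\!\left[\Big(\bs F(y(t)) - \bs F(y(s)) - \int_s^t(\bs L\bs F)(y(u))\,du\Big)\,H(y(s_1),\ldots,y(s_k))\right] \;=\; 0.$$
By \eqref{3-3}, $\mb Q^*$ charges only paths continuous at any prescribed time, hence the evaluations at $s_1,\ldots,s_k,s,t$ and the Riemann integral define functionals that are continuous $\mb Q^*$-a.s.\ on $D(\bb R_+,S)$. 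The continuous mapping theorem together with bounded convergence (the integrand is uniformly bounded by $2\|\bs F\|_\infty+\|\bs L\bs F\|_\infty t$) transfers the identity to $\mb Q^*$, which is exactly the $\mb Q^*$-martingale property of \eqref{5-4}.

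The principal obstacle will be the optional sampling step: $S_{\ms V}(t)$ is not a priori bounded uniformly in $\omega$ and $\epsilon$, so a naive application of optional sampling fails. The saving observation is that the vanishing of $g_\epsilon$ on $\Delta$ (from (P1)) collapses the unbounded integral $\int_0^{S_{\ms V}(t)}g_\epsilon\,ds$ into the bounded integral $\int_0^t g_\epsilon(Y_\epsilon(u))\,du$, and the $L^\infty$-bound on $f_\epsilon$ (from (P2)) controls the boundary term; together they yield a deterministic bound on $M^\epsilon_{S_{\ms V}(t)}$ that justifies the localization $K\to\infty$. Everything else is routine Skorohod-space/weak-convergence machinery.
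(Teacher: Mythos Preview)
Your proof is correct and follows essentially the same line as the paper's: form the Dynkin martingale for $f_\epsilon$, apply optional sampling at $S_{\ms V}(t)$, use that $g_\epsilon$ vanishes on $\ms V^c$ to rewrite the time integral in terms of the trace, replace $f_\epsilon,g_\epsilon$ by $f,g$ via (P1)--(P2), and then pass to the limit using \eqref{3-3}. You add more explicit detail on the optional-sampling localization and on the continuous-mapping/bounded-convergence passage to $\mb Q^*$, but the structure and ideas are identical to the paper's proof.
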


\begin{proof}
Fix $1\le j\le n$, $x_0\in \ms V_j$ and a function $\bs F: S
\to \bb R$.  Let $f_\epsilon: \bb T^d \to \bb R$ be the function given
by assumption (P2). Then,
\begin{equation*}
M_\epsilon(t) \;=\; f_\epsilon (\widehat X^\epsilon_t) \;-\; \int_0^t 
(\widehat{\mc L}_\epsilon f_\epsilon)(\widehat X^\epsilon_s)\, ds \;=\;
f_\epsilon (\widehat X^\epsilon_t) \;-\; \int_0^t 
g_\epsilon (\widehat X^\epsilon_s)\, ds
\end{equation*}
is a martingale with respect to the filtration $\ms F_t$ and the
measure $\widehat{\bb P}^\epsilon_{x_0}$. Since $\{S_{\ms V}(t): t\ge
0\}$ are stopping times with respect to $\ms F_t$,
\begin{equation*}
\widehat M_\epsilon(t) \;=\; M_\epsilon(S_{\ms V}(t)) \;=\;  
f_\epsilon (Y_\epsilon(t)) \;-\; \int_0^{S_{\ms V}(t)} 
g_\epsilon (\widehat X^\epsilon_s)\, ds
\end{equation*}
is a martingale with respect to the filtration $\ms G_t$.  Since
$g_\epsilon$ vanishes on $\ms V^c$, by a change of variables,
\begin{equation*}
\int_0^{S_{\ms V}(t)} g_\epsilon (\widehat X^\epsilon_s)\, ds \;=\;
\int_0^{S_{\ms V}(t)} g_\epsilon (\widehat X^\epsilon_s)\, 
\chi_{\ms V} (\widehat X^\epsilon_s) \, ds \;=\;
\int_0^{t} g_\epsilon (\widehat X^\epsilon(S_{\ms V}(s)))\, ds\;.
\end{equation*}
Hence,
\begin{equation*}
\widehat M_\epsilon(t) \;=\; 
f_\epsilon (Y_\epsilon(t)) \;-\; 
\int_0^{t} g_\epsilon (Y_\epsilon (s))\, ds
\end{equation*}
is a $\{\ms G_t\}$-martingale under the measure $\bb
Q^\epsilon_{x_0}$.

By (P1) and (P2), $g_\epsilon$, resp. $f_\epsilon$, converge to $g$,
resp. $f$, uniformly in $\ms V$ as $\epsilon \to 0$. Hence, since
$Y_\epsilon(s) \in \ms V$ for all $s\ge 0$, we may replace in the
previous equation $g_\epsilon$, $f_\epsilon$ by $g$, $f$,
respectively, at a cost which vanishes as $\epsilon\to 0$. Therefore,
\begin{equation*}
\widehat M_\epsilon(t) \;=\; 
f (Y_\epsilon(t)) \;-\; \int_0^{t} g (Y_\epsilon (s))\, ds \;+\;
 o(1)\, 
\end{equation*}
is a $\{\ms G_t\}$-martingale under the measure $\bb
Q^\epsilon_{x_0}$.

Since $f$ and $g$ are constant on each set $\ms V_i$, $f
(Y_\epsilon(t)) = \bs F(y_\epsilon(t))$, $g (Y_\epsilon(t)) = \bs
G (y_\epsilon(t))$.  By the second condition in \eqref{3-3}, $\mb
Q^*$ is concentrated on trajectories which are continuous at any fixed
time with probability $1$. We may, therefore, pass to the limit and
conclude that $\bs F(y(t)) \;-\; \int_0^t (\bs L \bs F)(y(s))\, ds$ is
a martingale under $\mb Q^*$.
\end{proof}

\begin{theorem}
\label{th01}
Assume that conditions (P1), (P2), \eqref{5-1}, \eqref{3-4} are in
force.  Fix $j\in S$ and $x_0\in \ms V_j$. The sequence of measures
$\mb Q^\epsilon_{x_0}$ converges, as $\epsilon \to 0$, to the
probability measure $\mb Q_j$.
\end{theorem}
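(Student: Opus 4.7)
The strategy is the standard tightness plus uniqueness-of-limit-points approach to weak convergence in $D(\mathbb{R}_+, S)$. All the heavy lifting has been done in the preceding lemmas; what remains is to combine them correctly and to invoke well-posedness of the martingale problem.

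First, I would apply Lemma~\ref{l3-1}, which (using hypotheses \eqref{5-1} and \eqref{3-4}) gives both tightness of the sequence $\{\mb Q^\epsilon_{x_0}\}_{\epsilon>0}$ in $D(\mathbb{R}_+,S)$ and the fact that every limit point $\mb Q^*$ satisfies \eqref{3-3}, i.e.\ starts at $j$ almost surely and, at every fixed $t>0$, is continuous almost surely. Having secured tightness, it suffices to prove that every limit point of the sequence equals $\mb Q_j$.

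Fix a limit point $\mb Q^*$ and let $\epsilon_k \to 0$ be a subsequence along which $\mb Q^{\epsilon_k}_{x_0}$ converges to $\mb Q^*$. By Lemma~\ref{l08}, for every $\bs F : S \to \mathbb{R}$ the process
\begin{equation*}
\bs F(y(t)) \;-\; \int_0^t (\bs L \bs F)(y(s))\, ds
\end{equation*}
is a martingale under $\mb Q^*$ with respect to the canonical filtration on $D(\mathbb{R}_+,S)$. Combined with the initial condition $\mb Q^*\{y(0)=j\}=1$ provided by Lemma~\ref{l3-1}, this asserts that $\mb Q^*$ is a solution to the martingale problem for the generator $\bs L$ on the finite state space $S$ with deterministic initial datum $j$.

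Since $S$ is finite and $\bs L$ is a bounded linear operator on the (finite-dimensional) space of functions $S \to \mathbb{R}$, the martingale problem for $\bs L$ with prescribed initial distribution is well-posed; its unique solution is the law of the continuous-time Markov chain on $S$ with generator $\bs L$ started from $j$, which by definition is $\mb Q_j$. Therefore $\mb Q^* = \mb Q_j$, and since this holds for every limit point, tightness promotes the subsequential convergence to full convergence $\mb Q^\epsilon_{x_0} \to \mb Q_j$ as $\epsilon \to 0$.

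The only genuinely nontrivial ingredients in this argument are already packaged inside Lemmas~\ref{l3-1} and~\ref{l08}; the main conceptual step in the proof itself is recognizing that, on the finite state space $S$, the martingale problem for $\bs L$ uniquely determines the law on $D(\mathbb{R}_+,S)$ once the initial distribution is fixed. The potentially delicate point one must check in writing out the details is that the martingale identity established in Lemma~\ref{l08} with respect to the enlarged filtration $\{\ms G_t\}$ passes to the natural filtration of the canonical process on $D(\mathbb{R}_+,S)$ under the limit measure $\mb Q^*$, which follows from the continuity property \eqref{3-3} and a standard approximation of $\{\ms G_t\}$-stopping times by cylinder functions.
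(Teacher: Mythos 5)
Your proposal is correct and follows exactly the paper's argument: combine Lemma~\ref{l3-1} (tightness and the properties \eqref{3-3} of limit points) with Lemma~\ref{l08} (the martingale property \eqref{5-4}) and conclude by uniqueness of the solution to the martingale problem for $\bs L$ on the finite state space $S$ with initial point $j$. The additional remarks on the filtration are a sensible elaboration but not a departure from the paper's route.
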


\begin{proof}
The assertion is a consequence of Lemma \ref{l3-1}, Lemma \ref{l08} and
the fact that there is only one measure $\mb Q$ on $D(\bb R_+, S)$
such that $\mb Q[x(0)=j]=1$ and such that \eqref{5-4} 
is a martingale for all $\bs F : S\to\bb R$.
\end{proof}

\noindent{\bf Step 4: The finite-dimensional distributions.}  By
\cite[Proposition 1.1]{llm2016}, the finite-dimen\-sional distributions
of $x_\epsilon(t)$ converge to the finite-dimensional distributions of
$y(t)$ if the process $y_\epsilon(t)$ converges in the Skorohod
topology to $y(t)$ [Theorem \ref{th01}], if in the time-scale
$\theta_\epsilon$ the total time spent in $\Delta$ is negligible
[Lemma \ref{l5-1}] and if
\begin{equation*}
\lim_{\delta\to 0} \limsup_{\epsilon\to 0} \sup_{x\in \ms V} 
\sup_{\delta\le s\le 2\delta} \bb P^\epsilon_{x} 
[ X^\epsilon(s\theta_\epsilon) \in \Delta ]\;=\;0\;. 
\end{equation*}

This completes the argument.  The convergence of the
finite-dimensional distributions of $X^\epsilon_t$ and sharp
asymptotics for the transition time in the context of diffusions were
first obtained by Sugiura \cite{s95, s01}. The approach presented in
this section to prove the metastable behavior of a Markov process has
been proposed by Beltr\'an and L. \cite{BL1, BL7}. It has been
successfully applied to many models quoted in this section.  For
further reading on metastability, we refer to the books by Olivieri
and Vares \cite{ov} and Bovier and den Hollander \cite{bh}.

\smallskip\noindent{\bf Acknowledgments.} The results presented in
this review are the outcome of long standing collaborations. The
author wishes to thank his colleagues and friends, J. Beltr\'an,
E. Chavez, A. Gaudilli\`ere, M. Jara, M. Loulakis, M. Mariani,
R. Misturini, M. Mourragui, I. Seo, A. Teixeira, K. Tsunoda.

\end{document}